\newtheorem{Thm}{Theorem}[section]
\newtheorem{Lem}[Thm]{Lemma}
\newtheorem{Prop}[Thm]{Proposition}
\newtheorem{Cor}[Thm]{Corollary}
\theoremstyle{definition}
\newtheorem{Def}[Thm]{Definition}
\newtheorem{Ass}[Thm]{Assumption}
\newtheorem{Exa}[Thm]{Example}
\newcommand{\Z}{\mathbb{Z}}
\newcommand{\Zd}{\mathbb{Z}^d}
\newcommand{\Pp}{\mathbb{P}_p}
\newcommand{\PP}{\mathcal{P}} 
\newcommand{\U}{\mathcal{U}}  
\begin{document}

\title[Enlargement of subgraphs by percolation]{Enlargement of subgraphs of infinite graphs by Bernoulli percolation}
\author{Kazuki Okamura} 
\address{Research Institute for Mathematical Sciences, Kyoto University, Kyoto, 606-8502, JAPAN.} 
\email{kazukio@kurims.kyoto-u.ac.jp} 
\subjclass[2010]{60K35, 82B41, 82B43, 05C63, 05C80, 05C81}
\date{\today} 

\begin{abstract}
We consider changes in properties of a subgraph of an infinite graph 
resulting from the addition of open edges of Bernoulli percolation on the infinite graph to the subgraph.    
We give the triplet of an infinite graph, one of its subgraphs, and a property of the subgraphs.  
Then, in a manner similar to the way Hammersley's critical probability is defined, we can define two values associated with the triplet. 
We regard the two values as certain critical probabilities, and compare them  with Hammersley's critical probability. 
In this paper, we focus on the following cases of a graph property: 
being a transient subgraph, having finitely many cut points or no cut points, being a recurrent subset, or being connected. 
Our results depend heavily on the choice  of the triplet. 

Most results of this paper are announced in \cite{O16} without proofs.   
This paper gives full details of them.  
\end{abstract}

\maketitle

\setcounter{tocdepth}{2}
\tableofcontents

\section{Introduction and Main results}

A connected graph is called transient (resp. recurrent) if the simple random walk on it is transient (resp. recurrent).  
Benjamini, Gurel-Gurevich and Lyons \cite{BGGL} showed the cerebrating result claiming that the trace of the simple random walk on a transient graph is 
recurrent 
almost surely.  
If a connected subgraph of an infinite connected graph is transient, then the infinite connected graph is transient. 
Therefore, the trace is somewhat ``smaller" than the graph on which the simple random walk runs. 
Now we consider the following questions: 
How far are a transient graph $G$  and the trace of the simple random walk on $G$? 
More generally, how far are $G$ and a recurrent subgraph $H$ of $G$? 
How many edges of $G$ do we need to add to $H$ so that the enlargement  of $H$  becomes transient?  

There are numerous choices of edges of $G$ to be added to $H$.   
If we add finitely many edges  to $H$, then the enlarged graph is also recurrent.   
Therefore, we add {\it infinitely} many edges to $H$ and consider whether the enlarged graph is transient.    
In this paper, we add infinitely many edges of $G$ to $H$ {\it randomly}. 
Specifically, we add open edges of Bernoulli bond percolation on $G$ to $H$, 
and consider the probability that the enlargement of $H$  is transient.            

Now we state our framework. 
In this paper, a graph is a locally-finite simple graph. 
A simple graph is a non-oriented graph in which neither multiple edges or self-loops are allowed.   
$V(X)$ and $E(X)$ denote the sets of vertices and edges of a graph $X$, respectively.  
Denote the cardinality of $A \subset V(X)$ by $|A|$.  
If we consider the $d$-dimensional integer lattice $\Zd$,  
then it is the nearest-neighbor model. 

Let $G$ be an infinite connected graph.  
We say that a subgraph $H$ of $G$ is {\it connected} if for any two vertices $x$ and $y$ of $H$ there are vertices $x_0, \dots, x_n$ of $H$ 
such that $x_0 = x$, $x_n = y$, and  $\{x_{i-1}, x_{i}\} \in E(H)$ for each $i$. 
In this paper, we consider Bernoulli {\it bond} percolation and do not consider site percolation.   
Let $\Pp$ be the Bernoulli measure on the space of configurations of Bernoulli bond percolation on $G$ such that each edge of $G$  is open with probability $p \in (0,1)$.    
Denote a configuration of percolation by $\omega = (\omega_e)_{e \in E(G)} \in \{0,1\}^{E(G)}$. 
We say that an edge $e$ is open if $\omega_e = 1$ and closed otherwise. 
We say that an event $A \subset \{0,1\}^{E(G)}$ is increasing (resp. decreasing) if the following holds: 
if $\omega = (\omega_e) \in A$ and $\omega^{\prime}_{e} \ge  \omega_{e}$ (resp. $\omega^{\prime}_{e} \le  \omega_{e}$) for any $e \in E(G)$, then $\omega^{\prime} \in A$. 
Let $C_x$ be the open cluster containing $x \in V(G)$.  
We remark that $\{x\} \subset V(C_x)$ holds. 
By convention,  
we often denote the set of vertices $V(C_x)$ by $C_x$.  
Consider the probability that the number of vertices of $G$ connected by open edges from a fixed vertex is infinite under $\Pp$.  
Then Hammersley's critical probability $p_c (G)$ is the infimum of $p$ such that the probability is positive, that is, for some $x \in V(G)$, 
\[ p_{c}(G) = \inf\left\{p \in (0,1) : \Pp(|C_x| = +\infty) > 0 \right\}. \] 
This value does not depend on the choice of $x$. 

Similarly, we consider the probability that the enlarged graph is transient under $\Pp$ and either of the following two values: the infimum of $p$ such that the probability is positive, or the infimum of $p$ such that the probability is one.     
We regard these two values as certain critical probabilities, and compare them with Hammersley's critical probability. 
We also consider questions of this kind, not only for transience, but also for other graph properties.  

\begin{Def}[Enlargement of subgraph]\label{enl} 
Let $H$ be a subgraph of $G$.   
Let $\U(H) = \U_{\omega}(H)$ be a random subgraph of $G$ such that 
\[ V(\U(H)) := \bigcup_{x \in V(H)} V(C_x) \text{ and } E(\U(H)) := E(H) \cup \left(\bigcup_{x \in V(H)} E(C_x)\right). \]   
If $\omega$ is chosen according to $\Pp$, then, we write $\U(H) = \U_{p}(H) = \U_{p, \omega}(H)$. 
\end{Def}

If $H$ is connected, 
then $\U(H)$ is also connected. 
If $H$ consists of a single vertex $x$ with no edges, 
then $\U(H)$ is identical to $C_x$.      

In this paper, a {\it property} $\PP$ is an automorphism-invariant set of subgraphs of $G$. 
For ease of description,  
we denote $X \in \PP$ (resp. $X \notin \PP$) 
if a subgraph $X$ of $G$ 
satisfies (resp. does not satisfy) $\PP$.  
Let $\mathcal{F}$ be the cylindrical $\sigma$-algebra on the configuration space $\{0,1\}^{E(G)}$. 

\begin{Ass} 
We assume that an infinite connected graph $G$, a subgraph $H$ of $G$, and a property $\PP$ satisfy the following: \\
(i) $G \in \PP$ and $H \notin \PP$. \\
(ii) The event that $\U(H) \in \PP$ is $\mathcal{F}$-measurable and increasing. 

If $H$ is chosen according to a probability law $(\Omega^{\prime}, \mathcal{F}^{\prime}, \mathbb{P}^{\prime})$, 
then  we assume that 
$H \notin \PP$ $\mathbb{P}^{\prime}$-a.s., and   
the event $\U(H) \in \PP$ is $\mathcal{F}^{\prime} \otimes \mathcal{F}$-measurable and increasing for $\mathbb{P}^{\prime}$-a.s.,   
where $\mathcal{F}^{\prime} \otimes \mathcal{F}$ denotes the product $\sigma$-algebra of $\mathcal{F}^{\prime}$ and $\mathcal{F}$.  
\end{Ass}

In Section 2, 
we will check that the event $\{\U(H) \in \PP\}$ is $\mathcal{F}$-measurable for those properties, 
and give an example of $(G, H, \PP)$ such that $\U(H) \in \PP$ is {\it not} $\mathcal{F}$-measurable.   

\begin{Def}[A certain kind of critical probability]   
\begin{equation*}
p_{c, 1}(G, H, \PP) := \inf\left\{p \in [0,1] : \Pp(\U_p (H) \in \PP) > 0 \right\}. 
\end{equation*}
\begin{equation*}
p_{c, 2}(G, H, \PP) := \inf\left\{p \in [0,1] : \Pp(\U_p (H) \in \PP) = 1  \right\}. 
\end{equation*} 
If $H$ obeys a law $\mathbb{P}^{\prime}$, then we define $p_{c, i}(G, H, \PP)$, $i = 1,2$, 
by replacing $\Pp$ above with the product measure $\mathbb{P}^{\prime} \otimes \Pp$ of $\mathbb{P}^{\prime}$ and $\Pp$.    
\end{Def}

The main purpose of this paper is to compare the values $p_{c, i}(G, H, \PP)$, $i = 1,2$, with $p_c(G)$.   
If $H$ is a single vertex and $\PP$ is being an infinite graph,  
then the definitions of $p_{c, 1}(G, H, \PP)$ and $p_c(G)$ are identical and, hence, $p_{c,1}(G, H, \PP) = p_c(G)$.   
It is easy to see that $p_{c,2}(G, H, \PP) = 1$. 

Before we proceed to main results, we introduce a series of notation and definitions. 
For a connected graph $X$,  $d_X(x, y)$ denotes the graph distance between $x$ and $y$ in $X$, and let 
\[ B_{X}(x, n) := \{y \in V(X) : d_{X}(x, y) \le n\}, \ x \in X, n \ge 0. \]  

We now briefly state the notion of Cayley graphs. 
Let $\Gamma$ be a finitely generated countable group and $\mathcal{S}$ be a symmetric finite generating subset of $\Gamma$ which does not contain the unit element.  
Then the {\it Cayley graph of $\Gamma$ with respect to $\mathcal{S}$} is the graph such that the set of vertices is $\Gamma$ and the set of edges is $\{\{x, y\} \subset \Gamma : x^{-1}y \in \mathcal{S}\}$.   
This graph depends on choices of $S$. 
In this paper, all results concerning Cayley graphs of groups do not depend on choices of $\mathcal{S}$.  
We say that a graph $G$ has the {\it degree of growth} $d \in (0, +\infty)$ if for any vertex $x$ of $G$, 
\[ 0 < \liminf_{n \to \infty} \frac{|B_G (x,n)|}{n^d} \le \limsup_{n \to \infty} \frac{|B_G (x,n)|}{n^d} < +\infty. \]

Let $\deg_G (x)$ be the number of edges containing a vertex $x$ of $G$.   
$((S_n)_{n \ge 0}, (P^x)_{x \in V(G)})$ denotes the simple random walk on $G$, specifically, the following hold for any $n \ge 0$ and any $x, y, z \in V(G)$:  
\[ P^x (S_{n+1} = z | S_n = y) = \frac{1}{\deg_G (y)}, \textup{ if } \{y, z\} \in E(G), \]   
\[ P^x (S_{n+1} = z | S_n = y) = 0, \textup{ otherwise.}  \] 
\[ P^x (S_0 = x) = 1. \]

\subsection{Main results}  

In this paper, we focus on each of the following properties: 
(i) being a transient subgraph,   
(ii) having finitely many cut points or having no cut points, 
(iii) being a recurrent subset, 
and  
(iv) being a connected subgraph.

\subsubsection{Being a transient graph}

Let $\PP$ be being a transient graph. 

\begin{Thm}\label{epsilon-thm}
Let $G = \Zd, d \ge 3$. 
Then for any $\epsilon > 0$ there is  a recurrent subgraph $H_{\epsilon}$ such that $p_{c, 2}(G, H_{\epsilon}, \PP) \le \epsilon$.  
\end{Thm}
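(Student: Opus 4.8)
The plan is to invoke Lemma~\ref{epsilon} (the lemma labelled \texttt{epsilon}). That lemma reduces the claim to producing a \emph{single} recurrent subgraph $H$ of $G = \Zd$, $d \ge 3$, satisfying two conditions: first, the covering condition~(\ref{near}), namely that for every vertex $v \in V(G)$ either $v \in V(H)$ or $\mathcal{N}(v) \subset V(H)$; and second, the strict bound $p_{c,2}(G, H, \PP) < 1$. Once such an $H$ is found, Lemma~\ref{epsilon} with $i = 2$ immediately gives, for each $\epsilon > 0$, a subgraph $H_\epsilon$ with $p_{c,2}(G, H_\epsilon, \PP) \le \epsilon$, which is exactly the assertion. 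So the entire work is in constructing one good $H$.

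First I would choose $H$ to be a suitable recurrent sublattice of $\Zd$ that is nonetheless ``spread out'' enough to meet every vertex's neighborhood. A natural candidate is a union of coordinate lines: for instance, take $H$ to be the subgraph whose vertices are all points lying on some axis-parallel line through the lattice, arranged so that the lines are spaced so every vertex of $\Zd$ is either on a line or adjacent to one. Concretely in $\Zd$ one can take $H$ to be the set of vertices having at least $d-1$ coordinates equal to $0$ modulo a fixed pattern, i.e.\ a grid of parallel lines; such an $H$ is a disjoint (or sparsely connected) union of copies of $\mathbb{Z}$, hence recurrent, while its vertex set together with the vertex sets of neighbors covers all of $\Zd$, giving~(\ref{near}). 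I would verify recurrence of $H$ via the effective-resistance criterion (each component is one-dimensional, so recurrent), and verify the covering condition~(\ref{near}) by an elementary geometric check on the spacing of the lines.

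Next I would establish $p_{c,2}(G, H, \PP) < 1$, i.e.\ that for some $p < 1$ we have $\Pp(\U(H) \text{ is transient}) = 1$. The idea is that in $\Zd$ with $d \ge 3$, the supercritical infinite cluster is itself transient (this is classical, and in the Cayley-graph language is essentially the content behind Lemma~\ref{Recur-cluster}); more robustly, once $p > p_c(\Zd)$ the open cluster touching $H$ contains a transient structure. Since $H$ already meets a positive density of vertices and~(\ref{near}) holds, $\U(H)$ contains the supercritical cluster (up to the addition of $H$'s own edges) and is therefore transient $\Pp$-a.s.\ for any fixed $p \in (p_c(\Zd), 1)$; a $0$--$1$ law argument (transience of $\U(H)$ is a tail-type increasing event, and $H$ is infinite) upgrades positive probability to probability one. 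This yields $p_{c,2}(G, H, \PP) \le p < 1$.

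I expect the main obstacle to be the transience step rather than the covering or recurrence steps. The delicate point is controlling $\U(H)$: adding the open clusters hanging off the recurrent skeleton $H$ must be shown to genuinely produce a transient graph, and one must rule out degenerate behavior where the added clusters, though infinite in number, fail to lower the effective resistance enough. The cleanest route is to embed a transient subgraph explicitly—for example to exhibit inside $\U(H)$ a collection of disjoint (or low-overlap) infinite paths branching off $H$ whose combined effective resistance from a basepoint is finite, using the Nash--Williams or the parallel-resistance bound. Making this embedding and the resistance estimate precise, uniformly over the random configuration on the event of full measure, is where the real care is needed; the reduction via Lemma~\ref{epsilon} and the $0$--$1$ law are then routine.
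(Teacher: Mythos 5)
Your overall strategy is the same as the paper's (produce one recurrent $H$ with condition (\ref{near}) and $p_{c,2}(G,H,\PP)<1$, then invoke Lemma \ref{epsilon}), but your candidate $H$ does not satisfy (\ref{near}), and this is fatal to the reduction. Condition (\ref{near}) is \emph{not} the domination condition ``every vertex is on a line or adjacent to one'': it requires that every $v \notin V(H)$ have \emph{all} of its $2d$ neighbors in $V(H)$, i.e.\ that $V(G)\setminus V(H)$ be an independent set, which already forces $V(H)$ to have density at least $1/2$ in $\Zd$. No spaced grid of axis-parallel lines can achieve this: if $v$ lies off the lines and the lines through the transverse position of $v$ run in direction $e_1$, then $v\pm e_1$ are off the lines as well; and for your concrete suggestion (at least $d-1$ coordinates $\equiv 0$ modulo a pattern, say mod $2$ in $\Z^3$), the adjacent vertices $(1,1,0)$ and $(1,1,1)$ are both outside $V(H)$. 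Since (\ref{near}) is exactly what makes $\U_{\omega_2}(\U_{\omega_1}(H)) = \U_{\omega_1\vee\omega_2}(H)$ in the proof of Lemma \ref{epsilon}, the lemma cannot be applied to your $H$, and your argument then only gives $p_{c,2}(G,H,\PP)\le p_c(\Zd)$, not the arbitrarily small bound $\le\epsilon$ that the theorem asserts. A secondary problem: a disjoint union of copies of $\mathbb{Z}$ is disconnected, hence outside the scope of $\PP$ (Section 3 assumes $H$ connected), and ``sparsely connecting'' the lines while preserving recurrence is an additional construction you have not supplied.

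The repair is the paper's actual proof, and it is simpler than your construction: take $H$ to be a recurrent \emph{spanning} subgraph of $\Zd$, i.e.\ $V(H)=V(G)$; such a connected recurrent $H$ exists by \cite[(2.21)]{Wo} (comb-type subgraphs). With $V(H)=V(G)$, condition (\ref{near}) is vacuous. For $p>p_c(\Zd)$ the unique infinite cluster has all its vertices in $V(H)$, so $\U(H)$ contains it, and the infinite cluster is transient for $d\ge 3$ by Grimmett--Kesten--Zhang \cite{GKZ}; hence $\U(H)$ is transient $\Pp$-a.s.\ and $p_{c,2}(G,H,\PP)\le p_c(\Zd)<1$, after which Lemma \ref{epsilon} finishes the proof. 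Note that, contrary to the expectation in your last paragraph, the transience step is the routine part --- no effective-resistance or Nash--Williams estimates are needed once \cite{GKZ} is cited; the genuine content of the theorem is reconciling recurrence of $H$ with (\ref{near}), which your construction does not do. Also, transience of the supercritical cluster is not ``the content behind Lemma \ref{Recur-cluster}'' (that lemma concerns recurrent subsets); the correct reference is \cite{GKZ}.
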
 

This will be shown in subsection 3.1.   Other results of the case that $H$ is a fixed subgraph will also be stated and proved.   

\begin{Thm}\label{trace3}
(i) Let $G$ be a Cayley graph of a finitely generated countable group with the degree of growth $d \ge 3$.  
Let $H$ be the trace of the simple random walk on $G$. 
Then \[ p_{c, 1}(G, H, \PP) \ge p_c(G).\]  
(ii) Let $G = \Zd, d \ge 3$.  If $H$ is the trace of the simple random walk on $\Zd$, 
then \[ p_{c,1}(\Zd, H, \PP) = p_{c,2}(\Zd, H, \PP) = p_c(\Zd). \]    
\end{Thm}

This will be shown in Subsection 3.2. 

\subsubsection{On the number of cut points}

We now consider the number of cut points. 
Let $P^{x, y}$ be the law of two independent simple random walks on $G$ which start at $x$ and $y$, respectively.  

\begin{Def}[cut point]\label{cut-def}
We say that a vertex $x \in V(G)$ is a {\it cut point}
if we remove an edge $e$ containing $x$,  
then  
the graph splits into two {\it infinite} connected components.   
\end{Def}

The graph appearing in the proof of Theorem \ref{extre-graph} (ii) (see Figure 3) has a vertex such that if we remove it, then the graph splits into two connected components. 
However, it is {\it not} a cut point in the sense of the above definition.  

\begin{Thm}\label{Thm-cut}
Let $G$ be a Cayley graph of a finitely generated countable group with  the degree of growth $d \ge 5$.
Let $H$ be the trace of the two-sided simple random walk on $G$.  
Then if $p < p_c(G)$, 
then  $\U_p (H)$ has infinitely many cut points, $P^{o,o} \otimes \Pp$-a.s. 
\end{Thm}

It is known that the trace of the two-sided simple random walk on $\Zd$ has infinitely many cut points $P^{0,0}$-a.s. (Cf. Lawler \cite[Theorem 3.5.1]{La})
The result above means that in the subcritical regime, there remain infinitely many cut points that  
are not bridged by open bonds of percolation. 
We give a proof of this assertion in Section 4. 

\subsubsection{Being a recurrent subset}

Now we consider the case that $\PP$ is being a recurrent subset.    
In this paper, we regard this as a subgraph and consider the induced subgraph of the subset. 
(See Diestel \cite{D} for the definition of  induced subgraphs.)
We now define recurrent and transient subsets of $G$ by following Lawler and Limic \cite[Section 6.5]{LL}.  
We regard a recurrent subset as a subgraph and consider the induced subgraph of the recurrent subset. 

\begin{Def}[recurrent subset]\label{Def-recur} 
We say that a subset $A$ of $V(G)$ is a {\it recurrent subset}  
if for some $x \in V(G)$ \[ P^x\left(S_n \in A \text{ i.o. } n\right) > 0. \]   
(Here and henceforth,  ``i.o." is an abbreviation of  ``infinitely many".)    
Otherwise, $A$ is called a {\it transient subset}.  
This definition does not depend on choices of a vertex $x \in V(G)$.    
\end{Def}

\begin{Thm}\label{recur-sub-main}
Let $G$ be a Cayley graph of a finitely generated countable group with  the degree of growth $d \ge 3$. 
Let $H$ be the trace of the simple random walk on $G$. 
Then  \\
(i) If $d \ge 5$,  
\[ p_{c, 1}(G, H, \PP) = p_{c, 2}(G, H, \PP) =  p_{c}(G).  \] 
(ii) If $d = 3, 4$, 
\[ p_{c, 1}(G, H, \PP) = p_{c, 2}(G, H, \PP) =  0.  \] 
\end{Thm}

This will be shown in Subsection 5.2. The case that $H$ is a fixed subgraph will be dealt with in Subsection 5.1. 

\subsubsection{Being connected}

By Definition \ref{enl}, if $H$ is connected, then $\U(H)$ is also connected. 
On the other hand, if $H$ is {\it not} connected, then $\U(H)$ can be non-connected. 
For example, if $(V(G), E(G)) = (\Z, \{n,n+1 : n \in \Z\})$ and $(V(H), E(H)) = (\Z, \emptyset)$, then  
\[ \Pp(\U(H) \textup{ is connected}) = 0, \ \ p < 1. \] 

The following is introduced by Benjamini, H\"aggstr\"om and Schramm  \cite{BHS}. 

\begin{Def}[percolating everywhere]\label{pe} 
We say that a subgraph $H$ of $G$ is {\it percolating everywhere}
if $V(H) = V(G)$ and every connected component of $H$ is infinite. 
\end{Def}

We introduce a notion concerning connectivity.  
For $A, B \subset V(G)$, 
we let \[ E(A, B) := \left\{\{y,z\} \in E(G) : y \in A, z \in B \right\}.\]   

\begin{Def}\label{sc}
We say that $G$ satisfies (TI)  
if for every $A, B \subset V(G)$ satisfying  
\[ V(G) = A \cup B, \ A \cap B = \emptyset \text{ and } |A| = |B| = +\infty,\]   
$E(A, B)$ is an infinite set.   
\end{Def}

The following are easy to see. 
\begin{Exa}
(i) $\Zd, d \ge 2$, satisfy (TI).\\
(ii) $\mathbb{T}_d, d \ge 2$, does not satisfy (TI).\\
(iii) The trace of the two-sided simple random walk on $\Zd, d \ge 5$, does not satisfy (TI) a.s.  
\end{Exa}

The following concerns the connectedness of the enlargement of a percolating everywhere subgraph. 

\begin{Thm}\label{conn}
(i) If $G$ is (TI), then  for any percolating everywhere subgraph $H$ 
\[ p_{c,1}(G,H,\PP) = p_{c,2}(G,H,\PP). \]
If the number of connected components of $H$ is finite,  
then  \[ p_{c,1}(G,H,\PP) = p_{c,2}(G,H,\PP) = 0. \]
(ii) If $G$ does not satisfy (TI), then 
there is a percolating everywhere subgraph $H$ such that 
\[ p_{c,1}(G,H,\PP) = 0 \text{ and }  p_{c,2}(G,H,\PP) = 1. \]   
\end{Thm}    
We are not sure whether the following holds or not: 
if $G$ satisfies (TI) and $H$ is a percolating everywhere subgraph with infinitely many connected components    
then \[ p_{c, 1}(G, H, \PP) = p_{c, 2}(G, H, \PP) = 0. \]  

\subsection{Related results}

\cite{BHS} considers questions of this kind with a different motivation to ours. 
Their original motivation was 
considering 
the conjecture that for all $d \ge 2$, there is no infinite cluster in Bernoulli percolation on $\Zd$ with probability one at the critical point.   
If an infinite cluster of Bernoulli percolation $\mathcal{C}_{\infty}$ satisfies  
$p_{c}(\mathcal{C}_{\infty}) < 1$ $\Pp$-a.s. for any $p$,  
then  the conjecture holds.   
A question related to this is considering what kinds of conditions on a subgraph $H^{\prime}$ of $\Zd$ assure $p_c (H^{\prime}) < 1$.  
They introduced the concept of {\it percolating everywhere} (Recall Definition \ref{pe}.) and 
considered whether the following claim holds:  
if we add Bernoulli percolation to a percolating everywhere graph, 
then the enlarged graph is connected, and moreover, $p_{c}(\textup{the enlarged graph}) < 1$, $\Pp$-a.s. for any $p$.    
This case can be described using 
our terminology as follows.   
$G$ is $\Zd$, $H$ is a percolating everywhere subgraph, and  
$\PP$ is connected and moreover $p_c(\U(H)) < 1$.  
They showed that if $d = 2$, then $p_{c, 2}(G, H, \PP) = 0$, and   
conjectured that it also holds for all $d \ge 2$.   

Recently, Benjamini and Tassion \cite{BeTa} showed the conjecture for all $d \ge 2$  
by a method different from \cite{BHS}.   
Theorem \ref{conn} discusses the values $p_{c,i}(G, H, \PP), i = 1,2$, for percolating everywhere subgraphs $H$ of $G$. 
$G$ is not necessarily assumed to be $\Zd$, and the result depends on whether $G$ satisfies a certain condition. 

Several researches deal with comparison of values of two kinds of thresholds of Bernoulli percolation.    
If $p > p_c (G)$, it is natural to ask how the number of infinite clusters vary as $p$ is increased, and  how many more
edges are needed in order for the infinite graphs to unite. 
Let \[ p_u (G) := \inf\{p \in [0,1] : \textup{ there exists a unique infinite cluster $\Pp$-a.s.}\}.\]  
Then, $p_c (G) \le p_u(G) \le 1$. 
It is natural to ask whether $p_c (G) < p_u (G)$ holds.  
By Burton-Keane \cite{BK}, $p_c (\Zd) = p_u (\Zd)$, $d \ge 2$. 
By their arguments, it follows that $p_c (G) = p_u (G)$ holds if $G$ is a vertex-transitive amenable graph.  
Benjamini-Schramm \cite{BS96} conjectured that $p_c (G) < p_u (G)$ holds if $G$ is a non-amenable vertex-transitive graph. 
In \cite{BS01} they gave a partial resolution of it, specifically, $p_c (G) < p_u (G)$ holds if $G$ is a non-amenable vertex-transitive planar graph with one end.   
This issue is more complicated for general graphs. 
As $p$ increases, there are the following two possibilities:   
On the one hand, since finite clusters can join up, new infinite clusters can be generated. 
On the other hand, since infinite clusters can join up, the number of infinite clusters can decrease. 
Benjamini \cite[Chapter 9]{B} and Lyons and Peres \cite[Chapter 7, Section 9]{LP} gave examples of graphs for which  it would be difficult to understand how the numbers of infinite clusters vary as $p$ increases.   
Comparison of $p_c (G)$ and $ p_u(G)$ is an intriguing problem, but in this paper we do not deal with this issue. 
See \cite[Chapter 9]{B} and \cite[Chapter 7, Section 9]{LP} for more details of this problem.

\subsection{Structure of paper}

The remainder of this paper is organized as follows.  
Section 2 states some preliminary results including the measurability of $\{\U(H) \in \PP\}$.      
We consider the case that $\PP$ is being a transient graph, 
the case that $\PP$ is a property concerning the number of cut points of graphs, 
the case that $\PP$ is being a recurrent subset, 
and the case that $\PP$ is being connected and $H$ is percolating everywhere, in Sections 3 to 6 respectively.

\section{Preliminaries}  

This section consists of three subsections. 
First we give a lemma estimating $p_{c, i}(G, H, \PP)$. 
Then we state some results concerning random walk and percolation. 
Finally we discuss the measurability of the event $\{\U(H) \in \PP\}$.

\subsection{A lemma} 

Roughly speaking, in the following, we will show that under a certain condition, $p_{c, i}(G, H, \PP)$ can be arbitrarily small, if there is a ``suitable" subgraph $H$.       
Let $\mathcal{N}(v)$ be the set of neighborhoods of a vertex $v$.    

\begin{Lem}\label{epsilon}
Fix an infinite connected graph $G$ and a property $\PP$ for subgraphs of $G$.  
Let $i = 1, 2$. 
Assume that there is  a subgraph $H$ of $G$ such that 
\[ p_{c, i}(G, H, \PP) < 1, \ \text{ and  } \]  
\begin{equation}\label{near} 
v \in V(H) \text{ or } \mathcal{N}(v)  \subset V(H), \ \text{ for any } v \in V(G).  
\end{equation}  
Then for any $\epsilon > 0$ 
there is  a subgraph $H_{\epsilon}$ such that  $p_{c, i}(G, H_{\epsilon}, \PP) \le \epsilon$.  
\end{Lem}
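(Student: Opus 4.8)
The plan is to first unpack hypothesis \eqref{near}. Since $v\notin V(H)$ forces $\mathcal{N}(v)\subset V(H)$, no two vertices outside $V(H)$ can be adjacent, so \eqref{near} says exactly that $V(H)$ is a vertex cover of $G$: every edge of $G$ has at least one endpoint in $V(H)$. The use of this is a clean simplification of the enlargement. If $V(H_\epsilon)\supseteq V(H)$, then $V(H_\epsilon)$ is again a vertex cover, so every open edge lies in the cluster of one of its endpoints; hence $E(\U(H_\epsilon))=E(H_\epsilon)\cup\{e:\omega_e=1\}$, and a vertex outside $V(H_\epsilon)$ belongs to $V(\U(H_\epsilon))$ precisely when it carries an incident open edge. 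In other words, under \eqref{near} enlarging a subgraph simply adjoins the entire open subgraph of $G$, and $H\mapsto\U(H)$ is monotone in $H$. I would use this throughout.

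Next, fix $\epsilon>0$; I may assume $\epsilon<p_{c,i}(G,H,\PP)$, since otherwise $H$ itself already witnesses the claim. Choose $q'$ with $p_{c,i}(G,H,\PP)<q'<1$, so that at parameter $q'$ the increasing event $\{\U(H)\in\PP\}$ has positive probability (for $i=1$) or probability one (for $i=2$). The goal is to build $H_\epsilon\notin\PP$ whose $\epsilon$-enlargement reproduces this behaviour. One cannot hope for a naive stochastic domination of $\mathbb{P}_{q'}$ by the law of $H_\epsilon\cup\{\text{open edges}\}$ at density $\epsilon$: the latter is a product measure whose per-edge marginals are either $1$ (edges of $H_\epsilon$) or $\epsilon<q'$, and domination between product measures is coordinatewise, which would force $E(H_\epsilon)$ to contain essentially all of $E(G)$ and hence $H_\epsilon\in\PP$, contradicting the standing Assumption. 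Instead I would exploit that $\{\U(\cdot)\in\PP\}$ is increasing and is therefore secured by suitable sub-configurations: I replace each ``connection'' used by a $q'$-witness by $m$ edge-disjoint parallel realizations inside $G$, so that at per-edge density $\epsilon$ each such connection is realized with probability $1-(1-\epsilon)^m\ge q'$ once $m$ is large. Keeping these realizations disjoint across the (infinitely many) connections, independence then transfers the $q'$-statement down to level $\epsilon$: an ``at least one success'' argument for $i=1$, and a Borel--Cantelli together with monotonicity of $p\mapsto\Pp(\U(H_\epsilon)\in\PP)$ for $i=2$.

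The hard part will be manufacturing this edge-disjoint redundancy inside the fixed graph $G$ while simultaneously keeping $H_\epsilon$ outside $\PP$, so that the sprinkling is genuinely responsible for the jump into $\PP$ rather than $H_\epsilon$ already satisfying it. This is exactly where \eqref{near} earns its place: the vertex-cover property guarantees that all of the extra open edges are absorbed into $\U(H_\epsilon)$, so none of the manufactured redundancy is wasted and the monotone comparison with the $q'$-enlargement of $H$ is legitimate. I would finally verify, for the constructed $H_\epsilon$, the measurability and scope conditions of the Assumption, and run the cases $i=1$ and $i=2$ through their respective inputs (positivity of the probability versus an almost-sure statement), which is where the distinction between $p_{c,1}$ and $p_{c,2}$ enters the argument.
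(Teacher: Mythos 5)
Your first paragraph is correct, and it isolates exactly the role that (\ref{near}) plays in the paper's own argument: (\ref{near}) says $V(H)$ is a vertex cover, so enlargement just adjoins the whole open subgraph, and in particular $\U_{\omega_2}(\U_{\omega_1}(H)) = \U_{\omega_1 \vee \omega_2}(H)$ for any two configurations $\omega_1, \omega_2$. But the core of your proof --- replacing ``each connection used by a $q'$-witness'' by $m$ edge-disjoint parallel realizations --- is a genuine gap, not a deferred detail. First, the lemma is for an abstract increasing property $\PP$ (transience, being a recurrent subset, etc.); the event $\{\U(H)\in\PP\}$ is in general a tail event with no associated finite ``connections'' or witnesses, so the object you propose to duplicate is undefined. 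Second, even when one could make sense of it, the redundancy cannot be manufactured inside the fixed graph $G$: if $G$ is a tree (a case to which the lemma is meant to apply, cf.\ Theorem \ref{tree-graph} and the discussion around Theorem \ref{epsilon-thm}), any two vertices are joined by a unique path, so edge-disjoint parallel realizations simply do not exist. You flag this construction as ``the hard part''; it is in fact where the whole proof lives, and as proposed it fails.

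The idea you are missing is to let $H_\epsilon$ itself be a quenched enlargement of $H$. Since $p_{c,i}(G,H,\PP) < 1$, one can choose $q_1 < p_{c,i}(G,H,\PP)$ so close to it that $q_1 + \epsilon - q_1\epsilon > p_{c,i}(G,H,\PP)$. The superposition $\omega_1 \vee \omega_2$ of independent configurations $\omega_1 \sim \mathbb{P}_{q_1}$ and $\omega_2 \sim \mathbb{P}_{\epsilon}$ has law $\mathbb{P}_{q_1+\epsilon-q_1\epsilon}$, so by your own vertex-cover identity
\[ \mathbb{P}_{q_1} \otimes \mathbb{P}_{\epsilon}\left( \U_{\omega_2}(\U_{\omega_1}(H)) \in \PP \right) = \mathbb{P}_{q_1+\epsilon-q_1\epsilon}\left( \U(H) \in \PP \right), \]
which is positive for $i=1$ and equal to one for $i=2$. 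On the other hand, $q_1 < p_{c,i}$ guarantees that $\U_{\omega_1}(H) \notin \PP$ for $\mathbb{P}_{q_1}$-a.e.\ $\omega_1$ when $i=1$ (resp.\ for a set of $\omega_1$ of positive measure when $i=2$). Fubini's theorem then produces a single fixed configuration $\omega_1$ with $\U_{\omega_1}(H) \notin \PP$ and $\mathbb{P}_{\epsilon}\left( \U_{\omega_2}(\U_{\omega_1}(H)) \in \PP \right) > 0$ (resp.\ $= 1$), and one takes $H_\epsilon := \U_{\omega_1}(H)$, which inherits (\ref{near}) because $V(H_\epsilon) \supseteq V(H)$. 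Note that this also defuses your objection about stochastic domination: the level-$q'$ measure is not dominated by anything; it is \emph{decomposed}, with its $q_1$-part frozen into the deterministic subgraph $H_\epsilon$ and only the $\epsilon$-part left random, so no coordinatewise comparison of product measures is ever needed.
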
 

\begin{proof} 
We show this assertion for $i=1$. 
Let $\Phi : \{0,1\}^{E(G)} \times \{0,1\}^{E(G)} \to \{0,1\}^{E(G)}$ be 
the map defined by \[ \Phi(\omega_{1}, \omega_{2}) = \omega_{1} \vee \omega_{2}.\]  
(Here and henceforth $\omega_{1} \vee \omega_{2}$ means the maximum of $\omega_1$ and $\omega_2$.)  
Then the push-forward measure of the product measure $\mathbb{P}_{q_1} \otimes \mathbb{P}_{q_2}$ on $\{0,1\}^{E(G)} \times \{0,1\}^{E(G)}$ 
by $\Phi$ is $\mathbb{P}_{q_1+q_2 - q_1q_2}$. 

Since $p_{c, 1}(G, H, \PP) < 1$, 
we have that for any $q_2 > 0$, 
there is  $q_1 < p_{c, 1}(G, H, \PP)$ 
such that \[ q_1+q_2 - q_1q_2 > p_{c, 1}(G, H, \PP).\]

It is easy to see that \[ \U_{\omega_{2}}(\U_{\omega_{1}}(H)) \subset \U_{\omega_{1} \vee \omega_{2}}(H).\]       
By \eqref{near}, 
\[ \U_{\omega_{2}}\left(\U_{\omega_{1}}(H)\right) = \U_{\omega_{1} \vee \omega_{2}}(H). \]

Therefore,  
\[ \mathbb{P}_{q_1} \otimes \mathbb{P}_{q_2}
\left( \U_{q_2, \omega_{2}}(\U_{q_1, \omega_{1}}(H)) \in \PP \right)  = \mathbb{P}_{q_1+q_2-q_1q_2}\left( \U_{q_1+q_2-q_1q_2} (H) \in \PP \right) > 0. \]

Since $q_1 < p_{c, 1}(G, H, \PP)$, 
there is  a configuration $\omega_{1}$ such that $\U_{\omega_{1}}(H) \notin \PP$ 
and \[ \mathbb{P}_{q_2}\left( \U_{q_2, \omega_{2}}(\U_{\omega_{1}}(H)) \in \PP \right) > 0.\]   
Hence \[ p_{c, 1}(G, \U_{\omega_{1}}(H), \PP) \le q_2.\]     
We can show this for $i=2$ in the same manner.   
\end{proof}

\subsection{Random walk and percolation}

Let $((S_n)_{n \ge 0}, (P^x)_{x \in V(G)})$ be the simple random walk on $G$.    
Let $T_A$ be the first hitting time of $(S_n)_n$ to a subset $A \subset V(G)$, that is, 
\[ T_A := \inf\{n \ge 0 : S_n \in A\}, \]    
where we let $\inf \emptyset = +\infty$.   

\begin{Lem}\label{Recur-cluster} 
Let $G$ be a Cayley graph of a finitely generated group with  the degree of growth $d$.    
Let $o$ be the unit element of the finitely generated group.   
Assume $p_c(G) < 1$ and $p \in (p_c(G), 1)$. 
Then,\\
(i) There is  a unique infinite cluster $\mathcal{C}_{\infty}$, $\Pp$-a.s. \\
(ii) $\mathcal{C}_{\infty}$ is a recurrent subset of $G$, that is, 
\begin{equation}\label{recur-posi}
P^{o}\left(S_n \in \mathcal{C}_{\infty} \text{ i.o. } n\right) > 0,  \ \text{ $\Pp$-a.s.} 
\end{equation} 
(iii)  
\begin{equation}\label{recur-one}
P^{o}\left(S_n \in \mathcal{C}_{\infty} \text{ i.o. } n\right) = 1, \ \text{ $\Pp$-a.s.}  
\end{equation}        
\end{Lem}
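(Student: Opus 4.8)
The plan is to dispose of (i) with standard structure theory and then to derive both (ii) and (iii) from a single elementary computation, upgraded by ergodicity and the Liouville property. For (i), I would use the hypothesis that $G$ has finite degree of growth $d$: by Gromov's theorem the underlying group is virtually nilpotent, hence amenable, so $G$ is a transitive amenable graph. The Burton--Keane uniqueness argument then applies and produces a unique infinite cluster $\mathcal{C}_{\infty}$ whenever $p > p_c(G)$; in particular $\{x \in \mathcal{C}_{\infty}\}$ and $\{|C_x| = +\infty\}$ agree up to a $\Pp$-null event for every vertex $x$, which makes $\mathcal{C}_{\infty}$ well defined in the statements (ii) and (iii).

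The core of the argument is the following identity. Write $\theta := \Pp(|C_o| = +\infty) > 0$; by transitivity $\Pp(x \in \mathcal{C}_{\infty}) = \theta$ for every $x \in V(G)$. Since the walk and the percolation are independent, conditioning on the path gives, for every $n$,
\[ P^o \otimes \Pp\left(S_n \in \mathcal{C}_{\infty}\right) = \sum_{x} P^o(S_n = x)\,\Pp(x \in \mathcal{C}_{\infty}) = \theta . \]
Applying the reverse Fatou lemma to the events $A_n := \{S_n \in \mathcal{C}_{\infty}\}$, whose $\limsup$ is exactly $E := \{S_n \in \mathcal{C}_{\infty} \text{ i.o.}\}$, yields $P^o \otimes \Pp(E) \ge \limsup_n P^o \otimes \Pp(A_n) = \theta$. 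Setting $c(\omega) := P^o(S_n \in \mathcal{C}_{\infty}(\omega)\text{ i.o.})$ and using Fubini, I get $\int c\, d\Pp \ge \theta > 0$, so $c$ is strictly positive on a set of positive $\Pp$-measure.

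To promote this to (ii), I would invoke ergodicity of $\Pp$ under the group translations. For fixed $\omega$ the function $h_{\omega}(x) := P^x(S_n \in \mathcal{C}_{\infty}(\omega)\text{ i.o.})$ is nonnegative and harmonic for the simple random walk, so by irreducibility it is positive at one vertex iff it is positive at all of them; hence $\{c(\omega) > 0\}$ equals $\{h_{\omega} \equiv 0\}^{c}$. A translation by $g$ carries $\mathcal{C}_{\infty}(\omega)$ to its translate and gives $h_{\tau_g \omega}(x) = h_{\omega}(g^{-1}x)$, which vanishes identically iff $h_{\omega}$ does; thus $\{c(\omega) > 0\}$ is translation invariant. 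Ergodicity then forces $\Pp(c > 0) \in \{0,1\}$, and as this probability is at least $\theta$ it equals $1$, proving (ii).

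Finally, for (iii) I would rule out intermediate values of $c(\omega)$. The event $E$ is invariant under the time shift of the walk, so $h_{\omega}$ is a \emph{bounded} harmonic function; because $G$ is the Cayley graph of a virtually nilpotent group it enjoys the Liouville property, whence $h_{\omega}$ is constant in $x$ and the shift-invariant event $E$ satisfies the $0$--$1$ law $c(\omega) = P^o(E) \in \{0,1\}$ for $\Pp$-a.e.\ $\omega$. Combined with (ii), which already gives $c > 0$ almost surely, this forces $c(\omega) = 1$ for $\Pp$-a.e.\ $\omega$, which is (iii). I expect the last step to be the main obstacle: it rests on the fact that groups of polynomial growth are Liouville (equivalently, that shift-invariant events of the walk are $\Pp$-a.s.\ trivial), which together with Gromov's theorem and the ergodicity of Bernoulli percolation are the only nonelementary inputs; all of the probabilistic content otherwise reduces to the reverse-Fatou identity displayed above.
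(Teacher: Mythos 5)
Your proposal is correct, and for parts (i) and (iii) it is essentially the paper's own argument: amenability of the Cayley graph plus the Burton--Keane type uniqueness theorem gives (i) (though you reach amenability via Gromov's theorem, a much heavier tool than needed --- polynomial growth yields amenability directly, which is what the paper cites from Woess), and (iii) is exactly the paper's route, namely the Liouville property of polynomial-growth Cayley graphs combined with L\'evy's $0$--$1$ law, following Lemma 6.5.7 of Lawler--Limic. Where you genuinely diverge is (ii). The paper uses a stationarity identity: by translation invariance of $\Pp$ and the Markov property, $P^{o} \otimes \Pp\bigl(\bigcup_{n \ge N}\{S_n \in \mathcal{C}_{\infty}\}\bigr)$ does not depend on $N$ (translate the configuration by $S_N^{-1}$), so letting $N \to \infty$ shows that $\{S_n \in \mathcal{C}_{\infty} \text{ i.o.}\}$ has the same $P^{o}\otimes\Pp$-probability as the hitting event $\{T_{\mathcal{C}_{\infty}} < +\infty\}$; since the former is contained in the latter they coincide up to null sets, and the hitting probability is positive $\Pp$-a.s.\ by irreducibility, giving the almost-sure statement in one stroke. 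You instead get only the weaker bound $P^{o}\otimes\Pp(E) \ge \theta > 0$ from the constant marginal $\theta$ and reverse Fatou, and then must upgrade ``positive $\Pp$-measure of $\{c>0\}$'' to full measure with two further inputs: the zero set of the nonnegative harmonic function $h_{\omega}$ is empty or everything, and $\{h_{\omega} \equiv 0\}$ is invariant under translations (using uniqueness of $\mathcal{C}_{\infty}$), hence $\Pp$-trivial by ergodicity of Bernoulli percolation under the group action. Both arguments are sound; the paper's is shorter and avoids invoking ergodicity (a standard fact it does use elsewhere, e.g.\ in the cut-point section), while yours is somewhat more robust in spirit, as it only needs transitivity, ergodicity, and harmonicity rather than the exact distributional identity for the increments $S_N^{-1}S_{N+n}$ of a walk on a group.
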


\begin{proof}
By Woess \cite[Theorem 12.2 and Proposition 12.4]{Wo}, Cayley graphs of a finitely generated group with polynomial growth is amenable graphs.  
Therefore, by \cite{BK}, 
the number of infinite clusters is $0$ $\Pp$-a.s., or, it is $1$ $\Pp$-a.s.  
By $p > p_c(G)$, the latter holds. 
Thus we have assertion (i).  

We will show assertion (ii).  
Let $P^{o} \otimes \Pp$ be the product measure of $P^{o}$ and $\Pp$. 
\[ P^{o} \otimes \Pp\left(S_{n} \in \mathcal{C}_{\infty} \text{ i.o. } n\right)   
= \lim_{N \to \infty} P^{o} \otimes \Pp\left(\bigcup_{n \ge N}\{S_{n} \in \mathcal{C}_{\infty}\}\right). \]

Using the shift invariance of Bernoulli percolation and 
the Markov property for simple random walk,    
\[ P^{o} \otimes \Pp\left(\bigcup_{n \ge N}\{S_{n} \in \mathcal{C}_{\infty}\}\right)
= P^{o} \otimes \Pp\left(\bigcup_{n \ge 0} \left\{S_{N}^{-1} \cdot S_{N + n} \in \mathcal{C}_{\infty}\right\}\right)\]
\[ = P^{o} \otimes \Pp\left(\bigcup_{n \ge 0} \{S_{n} \in \mathcal{C}_{\infty}\}\right). \]
Here $S_{N}^{-1}$ is the inverse element of $S_N$ as group.   
Hence,
\[ P^{o} \otimes \Pp\left(S_{n} \in \mathcal{C}_{\infty} \text{ i.o. } n\right) 
= P^{o} \otimes \Pp\left(\bigcup_{n \ge 0} \{S_{n} \in \mathcal{C}_{\infty}\}\right). \]

Since 
\[ \{S_{n} \in \mathcal{C}_{\infty} \text{ i.o. } n\} \subset \bigcup_{n \ge 0} \{S_{n} \in \mathcal{C}_{\infty}\},\]   
we have 
\[ P^{o}\left(S_{n} \in \mathcal{C}_{\infty} \text{ i.o. } n\right) 
= P^{o}\left(\bigcup_{n \ge 0} \{S_{n} \in \mathcal{C}_{\infty}\}\right) 
= P^{o}\left(T_{\mathcal{C}_{\infty}} < +\infty\right) > 0, \Pp\text{-a.s.}  \]
Thus we have (\ref{recur-posi}).  

By \cite[Corollary 25.10]{Wo} all bounded harmonic functions on $G$ are constant.   
By following the proof of  \cite[Lemma 6.5.7]{LL}, we have (\ref{recur-one}).  
\end{proof}

\subsection{Measurability of $\U(H) \in \PP$}

Recall that $\mathcal{F}$ is the cylindrical $\sigma$-algebra of $\{0,1\}^{E(G)}$.  
First, we consider the case that $H$ is a non-random subgraph. 

\begin{Lem}
(i)  Let $H$ be a recurrent subgraph of a transient graph $G$. 
Then the event that $\U(H)$ is a transient subgraph of $G$ is $\mathcal{F}$-measurable. \\
(ii) Let $H$ be a recurrent subgraph of a transient graph $G$. 
Then the number of cut points of $\U(H)$ is an $\mathcal{F}$-measurable function.\\ 
(iii) Let $H$ be a transient subset of a transient graph $G$.  
Then the event that $\U(H)$ is a recurrent subset is $\mathcal{F}$-measurable. \\ 
(iv) Let $H$ be a non-connected subgraph of an infinite connected graph $G$. 
Then the event that $\U(H)$ is connected is $\mathcal{F}$-measurable. 
\end{Lem}

\begin{proof}
(i) Let $R_{\text{eff}}\left(x,  \U(H) \setminus B_{\U(H)}(x,n)\right)$ be the effective resistance from $x$ to the outside of $B_{\U(H)}(x,n)$. 
It suffices to show that \\
$R_{\text{eff}}\left(x, \U(H) \setminus B_{\U(H)}(x,n)\right)$ is an $\mathcal{F}$-measurable function for each $n$.   
Since $\U(H)$ is a connected subgraph of $G$, 
$B_{\U(H)}(x,n)$ is contained in $B_{G}(x,n)$.  
Therefore,    
$R_{\text{eff}}\left(x, \U(H) \setminus B_{\U(H)}(x,n)\right)$ is determined by configurations in $B_{G}(x,n)$ and hence is $\mathcal{F}$-measurable.        

(ii) It suffices to show that for any $z \in V(G)$, the event that $z \in \U(H)$ and $z$ is a cut point of $\U(H)$ is $\mathcal{F}$-measurable.    
$z$ is a cut point of $\U(H)$ if and only if  $z$ is a cut point of $\U(H \cap B_{G}(z, n))$ for any $n$.    

(iii)       
By Fubini's theorem, 
it suffices to see that $\{S_n \in \U(H)\}$ is $\mathcal{F}_{\text{SRW}} \otimes \mathcal{F}$-measurable. 
This follows from 
\[ \left\{S_n \in \U(H)\right\} = \bigcup_{y \in V(G)} \left\{S_n = y\right\} \times \left\{y \text{ is connected to } H \text{ by an open path}\right\}.\]       

(iv) If $x, y \in V(\U(H))$ are connected in $\U(H)$,   
then  there is  $n$ such that $x$ and $y$ are in a connected component of $\U(H) \cap B_{G}(x,n)$.  
This event is determined by configurations of edges in $B_{G}(x,n+1)$.         
\end{proof}

We now consider the case that $H$ is a random subgraph of $G$.   
Let $\mathcal{F}_{SRW}$ be the $\sigma$-algebra on the path space defined by the simple random walk on $G$ and $\mathcal{F}_{\text{SRW}} \otimes \mathcal{F}$ be the product $\sigma$-algebra of $\mathcal{F}_{\text{SRW}}$ and $\mathcal{F}$.  
The following easily follows from that the event that the trace of the simple random walk is identical with a given connected subgraph $H$ is $\mathcal{F}_{\text{SRW}}$-measurable.  

\begin{Lem}
Assume that the event $\U(H)$ satisfies $\PP$ is $\mathcal{F}$-measurable for any infinite connected subgraph $H$. 
Let $H$ be the trace of the simple random walk.  
Then the event $\U(H)$ satisfies $\PP$ is $\mathcal{F}_{\text{SRW}} \otimes \mathcal{F}$-measurable.   
\end{Lem}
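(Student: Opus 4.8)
The plan is to push the whole question forward onto the percolation $\sigma$-algebra $\mathcal{F}$ by encoding the trace as a configuration. Writing $(S_n)_{n \ge 0}$ for the walk, I would define the trace-encoding map $\Theta \colon \Omega^{\prime} \to \{0,1\}^{E(G)}$ by declaring $\Theta(S)_e = 1$ precisely when the edge $e$ is crossed by the walk, i.e. $e = \{S_n, S_{n+1}\}$ for some $n \ge 0$. Each coordinate is $\mathcal{F}_{\text{SRW}}$-measurable, since
\[ \{\Theta(S)_e = 1\} = \bigcup_{n \ge 0} \{\{S_n, S_{n+1}\} = e\} \in \mathcal{F}_{\text{SRW}}, \]
so $\Theta$ is measurable from $(\Omega^{\prime}, \mathcal{F}_{\text{SRW}})$ into $(\{0,1\}^{E(G)}, \mathcal{F})$. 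The trace of the walk is exactly the subgraph $H_{\Theta(S)}$ with edge set $\{e : \Theta(S)_e = 1\}$ and vertex set the endpoints of those edges; in these terms, the stated fact that $\{\text{trace} = H\} \in \mathcal{F}_{\text{SRW}}$ for a fixed infinite connected $H$ is just measurability of the singleton preimage $\Theta^{-1}(\{\eta_H\})$, where $\eta_H$ encodes $E(H)$.

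With this encoding the target event factors through $\Theta$:
\[ \{\U(H) \in \PP\} = (\Theta \times \mathrm{id})^{-1}(B), \qquad B := \{(\eta, \omega) : \U_{\omega}(H_{\eta}) \in \PP\}. \]
Here $B$ lives in the product space $\{0,1\}^{E(G)} \times \{0,1\}^{E(G)}$, and on the ($\mathcal{F}$-measurable) set of $\eta$ for which $H_{\eta}$ is not an infinite connected subgraph I simply set $B$ to be empty, since such $\eta$ never arise as traces. Because $\Theta \times \mathrm{id}$ is measurable from $\mathcal{F}_{\text{SRW}} \otimes \mathcal{F}$ to $\mathcal{F} \otimes \mathcal{F}$, the lemma will follow once $B \in \mathcal{F} \otimes \mathcal{F}$. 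The hypothesis supplies exactly the sections: for each admissible $\eta$ the slice $B_{\eta} = \{\omega : \U_{\omega}(H_{\eta}) \in \PP\}$ belongs to $\mathcal{F}$.

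The main obstacle is upgrading this sectionwise measurability in $\omega$ to the joint measurability of $B$. Writing $B = \bigcup_{H^{\prime}} (\Theta^{-1}(\{\eta_{H^{\prime}}\}) \times B_{\eta_{H^{\prime}}})$ exhibits it as a union over all infinite connected subgraphs $H^{\prime}$, which is in general uncountable, so measurability is not formal. I would settle this by reusing the finite-dependence already present in the proof of the previous lemma, now carried out jointly in $(\eta, \omega)$: for each vertex $x$ and radius $n$ the relevant finite quantity — the effective resistance inside $B_{\U(H)}(x,n)$ for transience, the cut-point status inside $\U(H \cap B_G(z,n))$, the event $\{S_n = y\} \cap \{y \text{ joined to } H \text{ by an open path}\}$ for recurrence of a subset, or the connectivity of $x,y$ inside $\U(H) \cap B_G(x,n)$ for connectedness — depends on $(\eta, \omega)$ only through their restrictions to a fixed finite edge set, hence is jointly measurable. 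Assembling the appropriate countable combination of limits in $n$ then presents $B$ as an $\mathcal{F} \otimes \mathcal{F}$-measurable set for each of the concrete properties treated here, and the claim follows by pulling back through $\Theta \times \mathrm{id}$.
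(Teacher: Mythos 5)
Your argument is correct in substance, and it takes a genuinely different --- in fact more careful --- route than the paper. The paper gives only a one-sentence justification: the claim is said to follow easily from the $\mathcal{F}_{\text{SRW}}$-measurability of the event that the trace equals a given connected subgraph $H^{\prime}$. The implicit argument is the decomposition of $\{\U(H) \in \PP\}$ as the union, over all realizable traces $H^{\prime}$, of $\{\text{trace} = H^{\prime}\} \times \{\omega : \U_{\omega}(H^{\prime}) \in \PP\}$, and this is formally sound only when that union is countable --- which it is not in general (on $\Zd$, $d \ge 3$, there are continuum many realizable traces). You name this obstruction explicitly: sectionwise measurability in $\omega$, which is all the hypothesis provides, does not upgrade to joint measurability for free. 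You then close the gap by a different device: encode the trace as a configuration $\eta \in \{0,1\}^{E(G)}$ through the measurable crossing map $\Theta$, and prove that $B = \{(\eta, \omega) : \U_{\omega}(H_{\eta}) \in \PP\}$ is $\mathcal{F} \otimes \mathcal{F}$-measurable by redoing the finite-window arguments of the preceding lemma jointly in $(\eta, \omega)$. The trade-off: the paper's statement is for an abstract $\PP$ satisfying the sectionwise hypothesis, but its proof is not complete at that level of generality; your proof is complete, but only for the concrete properties the paper actually uses. That restriction is essentially forced, since the abstract conclusion does not follow from the abstract hypothesis by any formal argument, so your property-by-property verification is the right repair.

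Two details deserve attention. First, your blanket claim that each building block depends on $(\eta, \omega)$ only through a fixed finite edge set is right for the transience and connectedness blocks --- a $G$-path starting at a vertex of $V(H_{\eta})$, all of whose edges lie in $E(H_{\eta})$ or are open, is automatically a path in $\U(H_{\eta})$, so the balls $B_{\U(H_{\eta})}(x,n)$ and the resistances are determined by the restriction of $(\eta,\omega)$ to edges in $B_{G}(x,n+1)$ --- but it is literally false for the cut-point and recurrent-subset blocks, where membership in a cluster of $H_{\eta}$ is a global event in $\omega$. There the repair is routine: either decompose over the finitely many possible restrictions of $\eta$ to the relevant ball and invoke, for each fixed finite piece of $H$, the $\omega$-measurability already established, or write the cluster event as a countable union over finite open paths, each term being finite-range jointly in $(\eta,\omega)$. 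Second, setting $B$ empty where $H_{\eta}$ is not infinite and connected alters the event on the set of walk paths with finite trace; this set is $P^{o}$-null but nonempty, so you should either note that for these concrete properties the sections are measurable for finite $H$ as well (the previous lemma covers recurrent, in particular finite, subgraphs), or state the conclusion modulo $P^{o} \otimes \Pp$-null sets. Neither point invalidates your approach.
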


\begin{Exa}[A triplet  $(G, H, \PP)$ such that the event $\{\U(H) \in \PP\}$ is not measurable]
We first show that there is a non-measurable subset of $\{0,1\}^{\mathbb{N}}$ with respect to the cylindrical $\sigma$-algebra of $\{0,1\}^{\mathbb{N}}$.    
Here and henceforth, $\mathbb{N}$ denotes the set of natural numbers. 
Let $\phi : \{0,1\}^{\mathbb{N}} \to \{0,1\}^{\mathbb{N}}$ be the one-sided shift and 
$A$ be an uncountable subset of $\{0,1\}^{\mathbb{N}}$ such that 
(i) \[ \bigcup_{n \ge 0} \phi^{-n}(A) = \{0,1\}^{\mathbb{N}} \setminus \left(\bigcup_{n \ge 1} \{x \in \{0,1\}^{\mathbb{N}} : \phi^n (x) = x\}\right),\] 
and 
(ii) for any $x, y \in A$ and any $n \ge 1$ $y \ne \phi^n (x)$.

Assume that $A$ is measurable. 
Let $\ell$ be the product measure of the probability measure $\mu$ on $\{0,1\}$ with $\mu(\{0\}) = \mu(\{1\}) = 1/2$. 
Since $\phi^{-i}(A) \cap \phi^{-j}(A) = \emptyset$ for $i \ne j$,  
\[ \ell\left(\bigcup_{i \ge 0} A\right) = \sum_{i \ge 0} \ell(\phi^{-i}(A)). \]   
Since $\cup_{n \ge 1} \{x \in \{0,1\}^{\mathbb{N}} : \phi^n (x) = x\}$ is countable, 
$\ell(\cup_{i \ge 0} A) = 1$.  
Since $\phi$ preserves $\ell$, 
we see that 
\[ \ell(\phi^{-i}(A)) = \ell(A)\] 
for any $i$, and 
\[ \sum_{i \ge 0} \ell(\phi^{-i}(A)) = 0 \text{ or } +\infty.\]     
But this is a contradiction. 
Hence $A$ is not measurable.

Let $G$ be the connected subgraph of $\mathbb{Z}^2$ whose vertices are \[ \{(x, 0) : x \ge -2\} \cup \{(y, 1) : y \ge -1\} \cup \{(-1, -1)\}.\]   
Then any graph automorphism of $G$ is the identity map between vertices of $G$.  
Let $H$ be the connected subgraph of $G$ whose vertices are \[ \{(x, 0) : x \ge -2\} \cup \{(-1, 1)\} \cup \{(-1, -1)\}.\]  
Then \[ E(G) \setminus E(H) = \{\{(n, 0), (n, 1)\} : n \in \mathbb{N}\}.\]  
Let $\widetilde\omega$ be the projection of $\omega \in \{0,1\}^{E(G)}$ to $\{0,1\}^{E(G) \setminus E(H)}$. 
Regard $E(G) \setminus E(H)$ as $\mathbb{N}$.      
Let $\PP$ be the property that a graph is isomorphic to a graph in the class $\{\U_{\omega}(H) : \widetilde\omega \in A\}$.  
Then \[ \{\U(H) \in \PP\} = A \times \{0,1\}^{E(H)}.\]  
This event is not measurable with respect to the cylindrical $\sigma$-algebra of $\{0,1\}^{E(G)}$.        
\end{Exa}

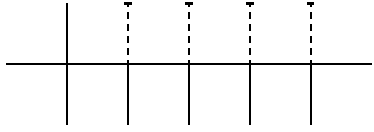
\begin{figure}[htbp]
\centering
\setlength\unitlength{0.8mm}
\begin{picture}(70, 30)
\put(0, 15){\line(10, 0){10}}
\put(10, 15){\line(0, 10){10}\line(10, 0){10}}
\put(10, 5){\line(0, 10){10}}
\put(20, 15){\dashbox(0, 10)}
\put(20, 15){\line(10, 0){10}} 
\put(20, 5){\line(0, 10){10}}
\put(30, 15){\dashbox(0, 10)}
\put(30, 15){\line(10, 0){10}} 
\put(30, 5){\line(0, 10){10}}
\put(40, 15){\dashbox(0, 10)}
\put(40, 15){\line(10, 0){10}} 
\put(40, 5){\line(0, 10){10}}
\put(50, 15){\dashbox(0, 10)}
\put(50, 15){\line(10, 0){10}} 
\put(50, 5){\line(0, 10){10}}
\end{picture}
\caption{\small Graph of $H$. The dotted lines are $E(G) \setminus E(H)$.} 
\end{figure}


\section{$\PP$ is being a transient graph}

In this section, we consider the case that $\PP$ is being a transient graph and assume that $H$ is connected.  

\subsection{The case that $H$ is a fixed subgraph}

\begin{Thm}[Extreme cases]\label{extre-graph}
(i) There is  a graph $G$ such that for any recurrent subgraph $H$ 
\[ 0 < p_{c}(G) < p_{c, 1}(G, H, \PP) = 1.\]  
(ii) There is  a graph $G$ such that for any infinite recurrent subgraph $H$ of $G$, 
\[ p_{c,2}(G, H, \PP) = 0.\]   
\end{Thm}

We remark that if $H$ is finite, then $p_{c,2}(G, H, \PP) = 1$. 

\begin{proof} 
(i) 
Let $G$ be the graph which is constructed as follows : 
Take $\mathbb{Z}^{2}$ and attach a transient tree $T$ 
such that $p_{c}(T) = 1$ to the origin of $\mathbb{Z}^{2}$.
This appears in H\"aggstr\"om and Mossel \cite[Section 6]{HM}.   
Then for any recurrent subgraph $H$,  
\[ p_{c}(G) < p_{c, 1}(G, H, \PP) = 1.\]

Let $p < 1 = p_c(T)$.   
$\U_p(H)$ is the graph obtained by the union of $\U_p(H \cap \mathbb{Z}^{2})$
and $\U_p(H \cap T)$. 
$\U_p(H \cap \mathbb{Z}^{2})$ is recurrent.  
Then,  $\Pp$-a.s., 
$\U_p(H \cap T)$ is the graph obtained by adding at most countably many finite graphs to $H \cap T$. 
Hence $\U_p(H \cap T)$ is also recurrent, $\Pp$-a.s.  
Since the intersection of $\U_p(H \cap T)$  and $\U_p(H \cap \mathbb{Z}^{2})$ is the origin,  
$\U_p(H)$ is recurrent, $\Pp$-a.s.  

(ii) 
Let $G$ be an infinite connected line-graph in Benjamini and Gurel-Gurevich \cite[Section 2]{BGG}. 
In their paper, it is given as a graph having multi-lines, 
but we can construct a simple graph by adding a new vertex on each edge.

Let $H$ be an infinite connected recurrent subgraph of $G$.  
Then $\mathbb{N} \subset V(H)$.  
Let $p > 0$.  
If the number of edges between $k$ and $k+1$ is $2k^3$, then 
\[ \lim_{k \to \infty} \Pp\left( \left| \text{two open consecutive edges connecting $k$ and $k+1$} \right| > k^2 \right) = 1 \]
and the convergence is exponentially fast.  
Hence 
\[ \Pp\left(\bigcap_{k \ge 1} \left\{ \left|\text{two open consecutive edges connecting $k$ and $k+1$}\right| > k^2 \right\} \right) > 0. \]
By this and the recurrence/transience criterion by effective resistance (See \cite[Theorem 2.12]{Wo} for example.),     
\[ \Pp(\U_p (H) \text{ is transient}) > 0.\]          
Since $H$ is infinite, we can use the 0-1 law and have \[ \Pp(\U_p (H) \text{ is transient}) = 1.\]    
\end{proof}

We give rough figures of the two graphs in the proof above.     

\begin{figure}[htbp]
\begin{minipage}{0.45\hsize}
\centering
\includegraphics[width = 6cm, height = 4cm, bb = 0 0 800 600]{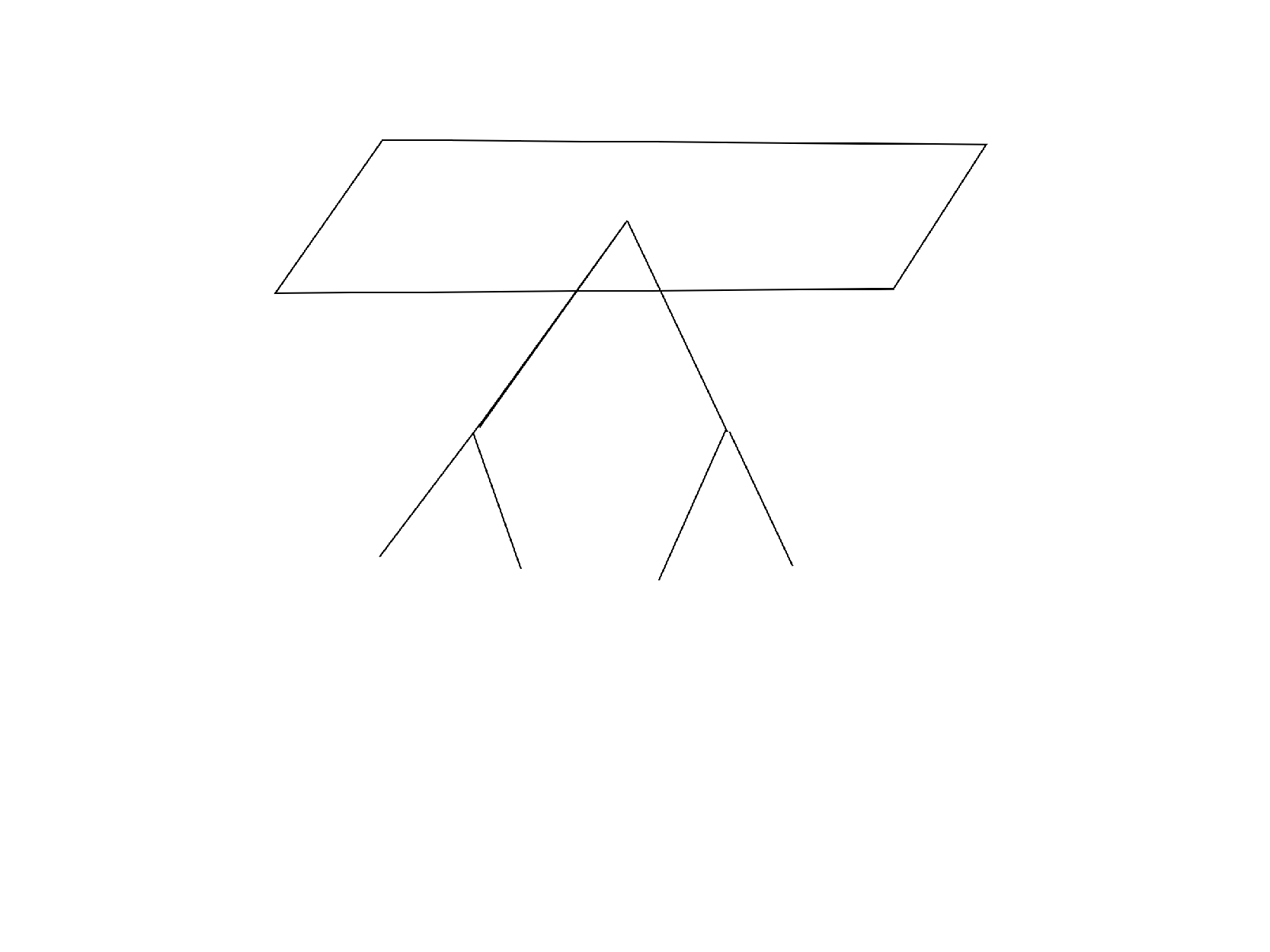}
\caption{\small Graph in the proof of Theorem \ref{extre-graph} (i)} 
\end{minipage}
\begin{minipage}{0.45\hsize}
\centering
\includegraphics[width = 6cm, height = 4cm, bb = 0 0 800 600]{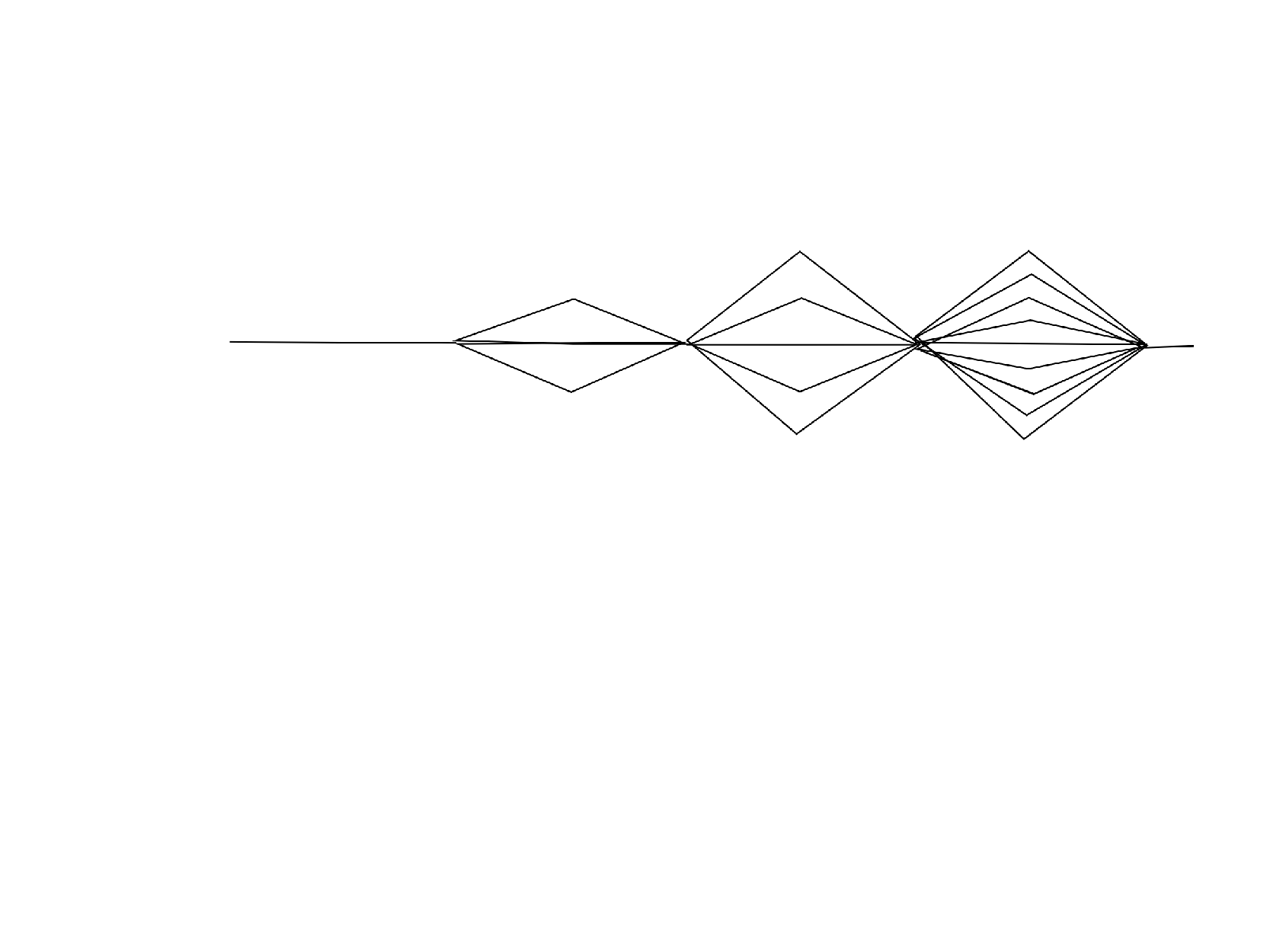}  
\caption{\small Graph in the proof of Theorem \ref{extre-graph} (ii)}
\end{minipage}
\end{figure}

The proof of (ii) above heavily depends on the fact that $G$ has unbounded degrees. 
Now we consider a case that $G$ has bounded degrees.

\begin{proof}[Proof of Theorem \ref{epsilon-thm}] 
Let $H$ be a recurrent subgraph of $\Zd$ such that $V(H) = V(G)$. 
By \cite[(2.21)]{Wo} such $H$ exists. 
If $p > p_{c}(\Zd)$, 
then $\U_p(H)$ contain the unique infinite open cluster $\Pp$-a.s.  
By Grimmett, Kesten and Zhang \cite{GKZ}, $\U_p(H)$ is transient $\Pp$-a.s.  
Hence \[ p_{c, 2}(G, H, \PP) \le p_{c}(\Zd) < 1.\]  
Now the assertion follows from this and Lemma \ref{epsilon}.   
\end{proof}

In the proof of Theorem \ref{epsilon-thm}, we choose a subgraph $H$ such that $V(H) = V(G)$ and apply Lemma \ref{epsilon}.   
However, if $H$ is a connected proper subgraph of an infinite tree $T$ with $\deg(x) \ge 2, \forall x \in V(T)$,    
then \eqref{near} in Lemma \ref{epsilon} fails.

\begin{Thm}\label{tree-graph}  
Let $T$ be an infinite transient tree. Then \\
(i) If $T^{\prime}$ is a recurrent subtree of $T$, then 
\[ p_{c, 1}(T, T^{\prime}, \PP) = p_{c}(T).\] 
(ii) If $T^{\prime}$ is an infinite recurrent subtree of $T$, then 
\[ p_{c,2}(T, T^{\prime}, \PP) = \sup\left\{p_{c}(H) : H \text{ is a transient subtree in } T \text{ and } E(H) \cap E(T^{\prime}) = \emptyset  \right\}. \]
\end{Thm}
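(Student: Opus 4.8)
The plan is to analyze both parts through a single structural picture. Since $T$ is a tree and $T'$ a subtree, deleting the edges $E(T')$ splits $T$ into pairwise edge-disjoint \emph{pendant components} $\{F_v\}_{v\in V(T')}$, where $F_v$ is the component of $T\setminus E(T')$ containing $v$; each $F_v$ meets $T'$ only at $v$, and every vertex outside $T'$ reaches $T'$ through a unique such $v$. Consequently $\U(T')$ is precisely $T'$ together with, for each $v$, the open cluster of $v$ computed \emph{inside} $F_v$. The key tool is the transience criterion: $\U(T')$ is transient if and only if at least one of these open pendant clusters is an infinite transient tree. The ``if'' direction is Rayleigh monotonicity (a transient subnetwork forces the ambient network to be transient, via \cite[Theorem 2.12]{Wo}). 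For ``only if'' I would use the induced (trace) random walk on $V(T')$: an excursion of the walk into a finite or recurrent pendant returns to its base vertex almost surely, so if every open pendant is finite or recurrent the trace on $T'$ is simple random walk on the recurrent tree $T'$ with almost surely finite excursions, whence $\U(T')$ is recurrent.

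For part (i) I would combine this with the branching-number description of percolation on trees due to Lyons: once $p>p_c(T)=1/\mathrm{br}(T)$, an infinite open cluster has branching number $p\cdot\mathrm{br}(T)>1$ and is therefore transient almost surely on the event of being infinite. If $p<p_c(T)$ all clusters are finite a.s., so by the criterion $\U(T')$ is recurrent a.s.\ and $\Pp(\U(T')\in\PP)=0$; hence $p_{c,1}(T,T',\PP)\ge p_c(T)$. Conversely, for $p>p_c(T)$ fix $x\in V(T')$; with positive probability $C_x$ is infinite, hence transient, and since $C_x\subset\U(T')$ Rayleigh monotonicity makes $\U(T')$ transient with positive probability, giving $p_{c,1}(T,T',\PP)\le p_c(T)$ and thus equality.

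For part (ii), writing $p^\ast$ for the supremum on the right-hand side, I would prove $p_{c,2}=p^\ast$ by two inequalities, exploiting that the clusters in distinct $F_v$ are \emph{independent}. For $p_{c,2}\le p^\ast$: when $p>p^\ast$, every transient subtree edge-disjoint from $T'$ — in particular every transient pendant component and its subtrees — has critical probability below $p$, so each such pendant yields an infinite transient open cluster with positive probability; a Borel--Cantelli argument over the independent pendants then gives that almost surely some open pendant is transient, so $\U(T')\in\PP$ almost surely. For $p_{c,2}\ge p^\ast$: given $p<p^\ast$, choose a transient subtree $H$ edge-disjoint from $T'$ with $p_c(H)>p$, and show that with positive probability every open pendant cluster is finite or recurrent, so by the criterion $\U(T')$ is recurrent with positive probability and the transience probability cannot equal one.

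The hardest step, and the one I would expect to carry the real content, is the identification of the percolation threshold with the supremum $p^\ast$, i.e.\ the lower bound in (ii). One must convert the presence of a near-optimal transient subtree $H$ with $p_c(H)$ close to $p^\ast$ into a genuinely positive probability that \emph{no} open pendant anywhere along $T'$ is transient, and this requires controlling all the independent pendant components simultaneously: estimating, for $p<p_c(H)$, the probability that the open cluster inside each $F_v$ fails to be transient, and verifying that the product of these probabilities over the possibly infinitely many pendants stays positive. This is exactly the point at which the value $p^\ast$ — rather than a cruder threshold built only from the pendant components — must be shown to be the correct quantity, and it is where I anticipate the delicate combinatorial and probabilistic estimates to be concentrated, with the transience criterion and the Lyons input playing only a supporting, routine role.
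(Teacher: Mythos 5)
Your structural setup is sound, and for part (i) your argument is essentially the paper's: subcritically all clusters are finite, so $\U(T')$ is $T'$ with countably many finite trees attached and hence recurrent; supercritically $C_x$ is transient with positive probability (the paper cites Peres, Exercise 14.7, for exactly this) and Rayleigh monotonicity finishes. The pendant decomposition $\{F_v\}_{v\in V(T')}$, the independence of the open clusters inside distinct pendants, and the criterion that $\U(T')$ is transient if and only if the open cluster of some $v\in V(T')$ inside $F_v$ is transient, are all correct. The genuine gap is the step you explicitly postpone, the lower bound $p_{c,2}(T,T',\PP)\ge p^{\ast}$ in (ii), and it is not merely delicate: it cannot be filled, because that inequality is false, and your own framework refutes it.

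Take $T=\mathbb{T}_3$ and $T'$ a ray $v_0,v_1,\dots$. Inside the pendant $F_0$, which contains a binary tree, take the spherically symmetric subtree $H$ with $2^{\lfloor\sqrt{n}\rfloor}$ vertices at level $n$. Routing current $2^{-\lfloor\sqrt{n}\rfloor}$ through each edge between levels $n-1$ and $n$ gives a unit flow to infinity with energy $\sum_n 2^{-\lfloor\sqrt{n}\rfloor}<+\infty$, so $H$ is transient; but the expected number of level-$n$ vertices of $H$ joined to its root by an open path is $2^{\lfloor\sqrt{n}\rfloor}p^n\to 0$ for every $p<1$, so $p_c(H)=1$. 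Hence $p^{\ast}=1$. On the other hand, for any $p>1/2$ the pendants $F_1,F_2,\dots$ are isomorphic, edge-disjoint trees with $p_c(F_n)=1/2$, so the events that the cluster of $v_n$ inside $F_n$ is transient are independent with a common positive probability (this is exactly your upper-bound mechanism); by Borel--Cantelli one of them occurs almost surely, and Rayleigh monotonicity makes $\U(T')$ transient almost surely. Thus $p_{c,2}=1/2<1=p^{\ast}$: a single near-optimal subtree $H$ of large $p_c$ sitting inside one pendant imposes no constraint on the other pendants, and the product $\prod_{v}\Pp\left(\text{cluster of } v \text{ in } F_v \text{ is not transient}\right)$ that you would need to keep positive is in fact $0$. (Your decomposition actually yields the correct characterization, namely $p_{c,2}=\inf\{p:\sum_{v\in V(T')}\Pp(\text{cluster of } v \text{ in } F_v \text{ is transient})=+\infty\}$, which is not the supremum in the statement; note also that even your upper bound needs divergence of this sum, not just positivity of each term.) You are in good company: the paper's own proof of this direction is invalid as well --- it argues that the walk on $T$ can hit $H$ and stay in it with positive probability, which says nothing about recurrence of $\U(T')$ --- and the stated formula contradicts the paper's own Corollary that $p_{c,2}(\mathbb{T}_d,T',\PP)=p_c(\mathbb{T}_d)$. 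So the correct conclusion of your analysis is that the identity in (ii) needs to be repaired, not that a cleverer estimate will prove it.
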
  

\begin{proof}
(i) 
By Peres \cite[Exercise 14.7]{P}, 
if $p > p_{c}(T)$, then, 
\[ \Pp(C_{v} \text{ is transient}) > 0, \] for any $v \in T$. 
Since $\Pp$-a.s. $C_{v} \subset \U_p(T^{\prime})$ for any $v \in T$, 
we have that 
\[ \Pp(\U_p(T^{\prime}) \text{ is transient}) > 0.\]  
Therefore, $p_{c, 1}(T, T^{\prime}, \PP) \le p_{c}(T)$. 

If $p < p_{c}(T)$, 
then, $\Pp$-a.s., 
$\U_p(T^{\prime})$ is an infinite tree obtained by attaching at most countably many finite trees to $T^{\prime}$.
Hence, $\U_p(T^{\prime})$ is also a recurrent graph $\Pp$-a.s. 
Therefore, $p_{c, 1}(T, T^{\prime}, \PP) \ge p_{c}(T)$.

(ii) 
Assume that there is  a transient subtree $H$ of $T$ such that $E(H) \cap E(T^{\prime}) = \emptyset$ and  
$p < p_c(H)$. 
There is  a finite path from $o$ to a vertex of $H$. 
Since $H$ is transient, 
the probability that random walk starts at $o$ and, then, goes to a vertex of $H$ and remains in $H$ after the hitting to $H$ is positive.  
Hence 
the probability that $\U_p(T^{\prime})$ is recurrent under $\Pp$ is positive. 
Hence $p \le p_{c,2}(T, T^{\prime}, \PP)$.   

Assume that 
\[ p > \sup\left\{p_{c}(H) : H \text{ is a transient subtree in } T \text{ and } E(H) \cap E(T^{\prime}) = \emptyset    \right\}.\] 
Since there are infinitely many transient connected subtrees $H$ of $T$ such that $E(H) \cap E(T^{\prime}) = \emptyset$,     
$\U_p(T^{\prime})$ contains at least one infinite transient cluster in $H$, $\Pp$-a.s.   
\end{proof}

Hereafter $\mathbb{T}_d$ denotes the $d$-regular tree, $d \ge 2$.   
By Theorem \ref{tree-graph}, 
\begin{Cor}
Let $T = \mathbb{T}_d, d \ge 3$ and $T^{\prime}$ be a recurrent subgraph. 
Then 
\[ p_{c, 1}(T, T^{\prime}, \PP) = p_{c, 2}(T, T^{\prime}, \PP) = p_c(T).\]      
\end{Cor}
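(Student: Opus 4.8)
The plan is to read off the two equalities from Theorem \ref{tree-graph}, supplemented by one a.s.-transience estimate. Since $p_c(\mathbb{T}_d)=1/(d-1)<1$ for $d\ge 3$, the tree $T=\mathbb{T}_d$ is transient, and any connected recurrent subgraph $T'$ is a recurrent subtree; hence Theorem \ref{tree-graph}(i) applies and gives $p_{c,1}(T,T',\PP)=p_c(T)$. Moreover, because the event $\{\U(T')\in\PP\}$ is increasing, the map $p\mapsto \Pp(\U(T')\in\PP)$ is nondecreasing, so $\{p:\Pp(\U(T')\in\PP)=1\}\subset\{p:\Pp(\U(T')\in\PP)>0\}$ and therefore $p_{c,2}(T,T',\PP)\ge p_{c,1}(T,T',\PP)=p_c(T)$ in all cases. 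It then remains only to prove the reverse inequality $p_{c,2}(T,T',\PP)\le p_c(T)$.

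To this end I would fix $p>p_c(\mathbb{T}_d)$ and show that $\U(T')$ is transient $\Pp$-a.s., treating the infinite $T'$ case (the degenerate case of finite $T'$, where $p_{c,2}=1$, being excluded separately). I would first locate infinitely many vertices $v_1,v_2,\dots\in V(T')$ and, for each $j$, an edge $e_j=\{v_j,u_j\}\notin E(T')$, chosen far apart along $T'$ so that the rooted subtrees $S_j$ of $\mathbb{T}_d$ hanging below $u_j$ (pointing away from $v_j$) are pairwise disjoint. Such vertices occur in infinite supply because $T'$, being recurrent, must differ from $\mathbb{T}_d$ at infinitely many vertices (a subtree agreeing with $\mathbb{T}_d$ off a finite set would still be transient), so infinitely many vertices of $T'$ have $T'$-degree strictly less than $d$ and thus admit an edge leaving $T'$. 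Each $S_j$ is a full rooted $(d-1)$-ary tree, whence $p_c(S_j)=1/(d-1)<p$.

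The key step is an independence (Borel--Cantelli type) argument. Let $B_j$ be the event that $e_j$ is open and that $u_j$ is joined to infinity by open edges inside $S_j$. Since the $S_j$ are edge-disjoint and $e_j\notin E(S_j)$, the events $B_j$ are independent, and by homogeneity $\Pp(B_j)=p\,\theta_{S_j}(p)=:q>0$ for every $j$, with $\theta_{S_j}(p)>0$ as $p>p_c(S_j)$. Hence $\Pp(\bigcap_j B_j^{c})=\prod_j(1-q)=0$, so $\Pp$-a.s.\ some $B_j$ occurs. On that event $\U(T')$ contains the infinite open cluster $C_{v_j}$, which in turn contains an infinite supercritical cluster of the $(d-1)$-ary tree $S_j$; as $d-1\ge 2$ this cluster has branching number exceeding $1$ and is therefore transient. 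By Rayleigh monotonicity the larger network $\U(T')$ is then transient as well, so $\Pp(\U(T')\text{ transient})=1$ for every $p>p_c(\mathbb{T}_d)$. This gives $p_{c,2}(T,T',\PP)\le p_c(\mathbb{T}_d)=p_c(T)$, and identifies the value of the supremum in Theorem \ref{tree-graph}(ii); combined with the opposite bound above, both equalities follow.

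I expect the main obstacle to be precisely this reverse inequality, that is, the a.s.-transience of $\U(T')$ throughout the supercritical regime. The two delicate points are (a) extracting infinitely many pairwise-disjoint full hanging subtrees off the thin recurrent set $T'$ while keeping the escape probabilities $q$ bounded away from $0$ uniformly in $j$, and (b) transferring transience from a single supercritical branch to all of $\U(T')$ via Rayleigh monotonicity. Everything else is a direct application of Theorem \ref{tree-graph}(i) and of the monotonicity of $p\mapsto\Pp(\U(T')\in\PP)$.
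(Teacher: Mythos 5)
Your proof is correct, but it takes a genuinely different route for the $p_{c,2}$ half. The paper's proof of this Corollary is a one-line deduction from Theorem \ref{tree-graph}: part (i) gives $p_{c,1}(T,T',\PP)=p_c(T)$ exactly as you do, and part (ii) is invoked to identify $p_{c,2}(T,T',\PP)$ with the supremum of $p_c(H)$ over transient subtrees $H$ edge-disjoint from $T'$, which for $\mathbb{T}_d$ is then read off as $1/(d-1)$ because the maximal subtrees hanging off $T'$ are full $(d-1)$-ary trees. You instead prove $p_{c,2}\le p_c(\mathbb{T}_d)$ from scratch: infinitely many pairwise disjoint full hanging branches exist (your degree argument is right --- were there only finitely many vertices of $T'$ of $T'$-degree $<d$, then $T'$ would contain a full $(d-1)$-ary subtree and be transient), each independently attaches an infinite supercritical cluster to $T'$ with a fixed positive probability, Borel--Cantelli gives one almost surely, and transience of supercritical tree clusters plus Rayleigh monotonicity finish; in effect you have re-proved, with all details supplied, the upper-bound half of the paper's proof of Theorem \ref{tree-graph}(ii) specialized to the full branches. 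Your route buys robustness: the supremum in Theorem \ref{tree-graph}(ii), read literally over \emph{all} transient subtrees edge-disjoint from $T'$, does not evaluate to $1/(d-1)$ --- a subtree of a hanging branch with polynomial level growth (say $\asymp n^{2}$ vertices at level $n$) is transient yet has branching number $1$, hence $p_c(H)=1$, so the literal supremum equals $1$ --- and thus the Corollary follows from the stated formula only under the intended reading in which $H$ ranges over the maximal branches hanging off $T'$. Your self-contained argument avoids this delicacy entirely (and, like the paper, it implicitly requires $T'$ to be infinite, which you correctly flag, since a finite $T'$ forces $p_{c,2}=1$). What the paper's route buys, under the intended reading, is brevity and a formula valid for general infinite transient trees $T$ rather than only for $\mathbb{T}_d$.
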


The value $p_{c, 2}$ depends on choices of a subgraph $T^{\prime}$ as the following example shows.  
\begin{Exa}
Let $T$ be the graph obtained by attaching a vertex of $\mathbb{T}_3$ to a vertex of $\mathbb{T}_4$. \\
(i) If $T^{\prime}$ is a subgraph of $\mathbb{T}_3$ which is isomorphic to $L = \left(\mathbb{N}, \{\{n, n+1\} : n \in \mathbb{N}\}\right)$,  
then \[ p_{c, 2}(T, T^{\prime}, \PP) = p_c(\mathbb{T}_3) = \frac{1}{2}.\]  
(ii) If $T^{\prime}$ is a subgraph of $\mathbb{T}_4$ which is isomorphic to $L$,  
then \[ p_{c, 2}(T, T^{\prime}, \PP) = p_c(\mathbb{T}_4) = \frac{1}{3}.\]  
\end{Exa}

We give a short remark about stability with respect to rough isometry.  
Let $G$ be the graph obtained by attaching one vertex of the triangular lattice to a vertex of the $d$-regular tree $\mathbb{T}_d$.     
If $d = 3$, then \[ p_c(G) = p_c(\text{triangular lattice}) = 2 \sin \left(\frac{\pi}{18}\right) < \frac{1}{2} \] and 
\[ p_{c,1}(G, H, \PP) = p_c(\mathbb{T}_3) = \frac{1}{2}. \]      
If $d$ is large, then    
\[ p_c(G) = p_{c, 1}(G, H, \PP) = \frac{1}{d-1}. \]     

As this remark shows, there is a pair $(G, H)$ such that \[ p_c(G) < p_{c,1}(G, H, \PP) < 1.\]   
We are not sure that there is a pair $(G, H)$ such that  \[ 0 < p_{c,1}(G, H, \PP) < p_c(G).\]

\subsection{The case that $H$ is the trace of the simple random walk} 
   
Hereafter $E^{\mu}$ denotes the expectation with respect to a probability measure $\mu$.  

\begin{proof}[Proof of Theorem \ref{trace3}] 
Let $p < p_c(G)$. 
We show that the volume growth of $\U_p(H)$ is (at most) second order.  
We assume that simple random walks start at a vertex $o$. 
Since $B_{\U_p(H)}(o, n)$ is contained in $B_{G}(o, n)$, 
\[ B_{\U_p(H)}(o, n) \subset \bigcup_{x \in V(H) \cap B_{G}(o, n)} C_x. \]
By Mensikov \cite{M},  
\[ E^{\Pp}\left[ \left| C_o \right| \right] < +\infty, \ \ p < p_c(G).\]   
Therefore,     
\[ E^{P^o \otimes \Pp}\left[ |B_{\U_p(H)}(o, n)| \right] \le E^{\Pp}[|C_o|] E^{P^o}\left[\left|V(H) \cap B_{G}(0, n)\right|\right]. \]
Using the Gaussian upper bounds for heat kernel obtained by Hebisch and Saloff-Coste \cite[Theorem 5.1]{HSC} and summation by parts, 
\begin{align*} 
E^{P^o}\left[ \left| V(H) \cap B_{G}(o, n) \right| \right] &\le \sum_{x \in B_{G}(o, n)} \sum_{m \ge 0} P^{o}(S_m = x) \\
&= \sum_{x \in B_{G}(o, n)} d_G(o,x)^{2-d} = O(n^2). 
\end{align*}  
Using this and Fatou's lemma,
\[ E^{P^o \otimes \Pp}\left[\liminf_{n \to \infty} \frac{| B_{\U_p (H)}(o, n) |}{n^2} \right] < +\infty. \]
Hence 
\[ \liminf_{n \to \infty} \frac{\left| B_{\U_p (H)}(o, n) \right|}{n^2} < +\infty, \ \ P^o \otimes \Pp\text{-a.s.} \]      
Assertion (i) follows from this and \cite[Lemma 3.12]{Wo}.  

Let $G = \Zd$, $d \ge 3$ and $H$ be the trace of the simple random walk on $G$.    
Then by Lemma \ref{Recur-cluster} and the transience of infinite cluster by \cite{GKZ}, 
\[ p_{c, 2}(G, H, \PP) \le p_c(G).\]   
Using this and assertion (i),  assertion (ii) follows. 
\end{proof}


\section{$\PP$ is a property concerning cut points}

In this section, we assume that $G$ is a transient graph and $H$ is a recurrent subgraph of $G$. 

\begin{proof}[Proof of Theorem \ref{Thm-cut}]
Fix a vertex $o$.    
First we will show that 
\begin{equation}\label{cut-1}  
P^{o,o} \otimes \Pp\left(\text{$o$ is a cut point of $\U_p (H)$}\right) > 0.
\end{equation}  

We give a rough sketch of proof of (\ref{cut-1}). 
First we show there exists a vertex $z$ such that two simple random walks starting at $o$ and $z$ respectively do not intersect with positive probability.  
Then we ``make" vertices in a large box closed and show the two random walks do not return to the large box with positive probability. 
Finally we choose a path connecting the two traces in a suitable way.  

Let \[ S^i (A) := \{S^i_n : n \in A\} \text{ for } A \subset \mathbb{N}, \ i = 1,2.\]  
Using the finiteness of mean size of open cluster in the subcritical phase by \cite{M} 
and the Gaussian upper bounds for heat kernel obtained by \cite[Theorem 5.1]{HSC},     
\begin{align*} 
\sum_{i, j \ge 0} P^{o,o} \otimes \Pp\left(\U_p(\{S^1_{i}\}) \cap \U_p(\{S^2_{j}\}) \ne \emptyset\right)  
&= \sum_{x \in V(G)} \sum_{i, j \ge 0} P^{o}(S_{i+j} = x) \Pp(x \in C_o) \\
&\le \left(\sup_{x \in V(G)} \sum_{k \ge 0} k P^{o}(S_{k} = x)\right) E^{\Pp} \left[\left| C_o \right|\right] < +\infty. 
\end{align*} 

Hence for large $N$,   
\[ P^{o,o} \otimes \Pp\left(\U_p \left(S^{1}\left([N, +\infty)\right)\right) \cap \U_p \left(S^2\left([N,+\infty)\right)\right) \ne \emptyset \right) \]
\[ \le \sum_{i, j \ge N} P^{o,o} \otimes \Pp\left(\U_p(\{S^1_{i}\}) \cap \U_p(\{S^2_{j}\}) \ne \emptyset\right) < 1. \]

Let \[ A := \left\{\U\left(S^{1} \left( [1, +\infty) \right)\right) \cap \U\left(S^{2}([0,+\infty))\right) = \emptyset \right\}.\]  
Then there is  a vertex $z \in V(G)$ such that $P^{z, o}(A) > 0$. 
If $z = o$, then (\ref{cut-1}) holds. 
Assume $z \ne o$. 
Let $B := B_{G}(o, 3d_G(o, z))$ and $C$ be the event that all edges in $B$ are closed. 

Since $p < 1$ and $A$ is decreasing,   
\[ P^{z,o} \otimes \Pp \left(A \cap C\right) > 0. \]

Since $S^1$ and $S^2$ are transient, 
there is  $N$ such that  
\[ P^{z,o} \otimes \Pp \left(A \cap C \cap \bigcap_{i = 1,2}\left\{S^i((N, \infty)) \cap B = \emptyset\right\} \right) > 0. \]

Now we can specify two finite paths of $S^i_j$.  
There are vertices $x^{i}_j, i=1,2, j = 0, \cdots, N$ such that 
\[ P^{z,o} \otimes \Pp \left(A \cap C \cap \bigcap_{i = 1,2}\left\{S^i_j = x^{i}_j, \forall j,  S^i((N, \infty)) \cap B = \emptyset\right\} \right) > 0. \]

Now we can pick up a path in $B$ connecting $\{x^{1}_j\}_{j} \cap B$ and $\{x^{2}_j\}_{j} \cap B$.   
We can let $S^i_j = x^i_j$, for $-m_i < j < 0$, $i = 1,2$, 
and 
$x^1_{-m_1} = x^2_{-m_2} = y_0$. 
\[ P^{y_0, y_0} \otimes \Pp \left(A \cap C \cap \bigcap_{i = 1,2}\left\{S^i_j = x^{i}_j,  \forall j,  S^i((N, \infty)) \cap B = \emptyset\right\} \right) > 0. \]
This event is contained in the event $\left\{\U (S^1([-m_1, +\infty)))  \cap \U (S^2([-m_2, +\infty))) = \emptyset\right\}$ and hence we have (\ref{cut-1}).   

Let $\mathcal{S}$ be the generating set of the Cayley graph $G$.   
Let $a e := \{a x, a y\}$ for an edge $e = \{x,y\}$ and a point $a \in \mathcal{S}$.     
Consider the following transformation $\Theta$ on $\mathcal{S}^{\mathbb{Z}} \times \{0,1\}^{E(G)}$ defined by 
\[ \Theta\left((a_j)_j, (\omega_e)_e\right) := \left((b_{j})_j, (\omega_{a_0 e})_e \right), \]
where we let $b_j := j+1$ for any $j \in \mathbb{Z}$.      
Then it follows that $\Theta$ preserves $P^{o,o} \otimes \Pp$.  

By applying the Poincar\'e recurrence theorem (See Pollicott and Yuri \cite[Theorem 9.2]{PY} for example)     
to $(\mathcal{S}^{\mathbb{Z}} \times \{0,1\}^{E(G)}, P^{o,o} \otimes \Pp, \Theta)$,  
we have 
\[ P^{o,o} \otimes \Pp \left(\U_p(\widetilde S((-\infty, n]))  \cap \U_p(\widetilde S([n+1, +\infty))) = \emptyset \text{ infinitely many } n \in \mathbb{Z} \right) > 0, \]
where we let \[ \widetilde S_n := \begin{cases} S^{1}_{n}, \ \  n \ge 0 \\ S^2_{-n}  \ \ n < 0.\end{cases}\]     

Define a transformation $\varphi_a$ on $\{0,1\}^{E(G)}$ by 
\[ \varphi_a \left((\omega_e)_e\right) := (\omega_{ae})_e.\]   
By following the proof of Bollob\'as and Riordan \cite[Lemma 1 in Chapter 5]{BR},  
the family of maps $\{\varphi_a : a \in \mathcal{S}\}$ is ergodic. 
By Kakutani \cite[Theorem 3]{Ka},    
$\Theta$ is ergodic with respect to $P^{o,o} \otimes \Pp$. 

Since the event that $\U_p(\widetilde S((-\infty, n]))  \cap \U_p(\widetilde S([n+1, +\infty))) = \emptyset$ holds for infinitely many $n \in \mathbb{Z}$ is $\Theta$-invariant, 
we have 
\[ P^{o,o} \otimes \Pp \left(\U_p(\widetilde S((-\infty, n]))  \cap \U_P(\widetilde S([n+1, +\infty))) = \emptyset \text{ i.o. } n \in \mathbb{Z} \right) = 1. \] 
\end{proof}

The following considers this problem at the critical point in high dimensions.  
It is pointed out by Itai Benjamini. (personal communication) 

\begin{Thm}\label{Itai}
Let $G = \Zd, d \ge 11$.  
Let $H$ be the trace of the two-sided simple random walk on $\Zd$.  
Let $p = p_c(\Zd)$. 
Then $\U_{p}(H)$ has infinitely many cut points $P^{o,o} \otimes \mathbb{P}_{p_c(\Zd)}$-a.s. 
\end{Thm}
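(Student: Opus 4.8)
The plan is to adapt the structure of the proof of Theorem~\ref{Thm-cut} to the critical point, replacing the two ingredients that there relied on $p<p_c(G)$ with their critical-point counterparts available in high dimensions. The overall architecture should be identical: first establish that $o$ is a cut point of $\U(H)$ with positive probability, and then upgrade this to infinitely many cut points by invoking ergodicity of the transformation $\Theta$ and the Poincar\'e recurrence theorem exactly as in the subcritical case. The geometric surgery in the box $B$ (closing all edges in $B$, specifying finite pieces of the two paths, and splicing in a connecting path) is purely local and carries over verbatim, since it uses only $p<1$ and the transience of the simple random walk, both of which hold at $p=p_c$ for $d\ge 11$.

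The substantive work is in re-deriving the summability estimate
\[ \sum_{i, j \ge 0} \mathbb{P}_{p_c} \otimes P^{o,o}\left(\U(\{S^1_{i}\}) \cap \U(\{S^2_{j}\}) \ne \emptyset\right) = \sum_{x \in \Zd} \sum_{i,j\ge 0} P^{o}(S_{i+j}=x)\,\mathbb{P}_{p_c}(x \in C_o) < +\infty \]
at criticality. In the subcritical proof this used finiteness of $E^{\mathbb{P}_p}[|C_o|]$ via \cite{MMS}; at $p_c$ the expected cluster size is infinite, so I would instead use the sharp two-point function estimate $\mathbb{P}_{p_c}(x\in C_o)=\Theta(|x|^{2-d})$, which follows from the lace-expansion results of Hara, van der Hofstad and Slade (valid precisely for $d$ large, whence the hypothesis $d\ge 11$). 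Combined with the random-walk decay $P^o(S_{i+j}=x)$ that feeds into the Green-function bound $\sum_{m} P^o(S_m=x)=\Theta(|x|^{2-d})$ from \cite[Theorem 5.1]{HSC}, the double sum becomes $\sum_{x} |x|^{2-d}\cdot|x|^{2-d}=\sum_x |x|^{4-2d}$, which converges for $d>4$. The dimensional threshold $d\ge 11$ therefore comes entirely from the regime in which the critical two-point function asymptotics are rigorously known, not from the convergence of the sum.

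With the summability in hand, the remainder proceeds unchanged: for $N$ large the probability that the two enlarged tails $\U(S^1([N,\infty)))$ and $\U(S^2([N,\infty)))$ intersect is strictly less than one, so there is a starting vertex $z$ with $P^{z,o}(A)>0$ for the decreasing event $A$, and the box-closing argument produces \eqref{cut-1} with $p$ replaced by $p_c$. The passage from positive probability to infinitely many cut points requires the ergodicity of $\Theta$ with respect to $P^{o,o}\otimes\mathbb{P}_{p_c}$; since the ergodicity argument via \cite[Lemma 1 in Chapter 5]{BR} and Kakutani's theorem \cite{Ka} does not use subcriticality, it applies at $p_c$ as well.

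The main obstacle I expect is ensuring that the critical two-point function bound $\mathbb{P}_{p_c}(x\in C_o)=O(|x|^{2-d})$ is genuinely available and correctly cited for the relevant dimensions; this is the one place where the proof departs from the self-contained tools used earlier in the paper and must import the lace-expansion machinery, which is exactly why the theorem is stated only for $d\ge 11$ rather than for all $d\ge 5$. A secondary point to verify carefully is that the decreasing event $A$ and the local surgery remain compatible with sampling percolation at $p_c$ rather than below it, but since every inequality used there needs only $p<1$, no genuine difficulty arises.
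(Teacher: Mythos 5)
Your strategy is exactly the paper's: replace the subcritical ingredient $E^{\Pp}[|C_o|]<+\infty$ by the critical two-point function decay $\mathbb{P}_{p_c(\Zd)}(x\in C_0)\le c\, d_{\Zd}(0,x)^{2-d}$ (the paper indeed cites Fitzner--van der Hofstad for the nearest-neighbor model in $d\ge 11$), verify the summability of $\sum_{x}\sum_{i,j\ge 0}P^{0}(S_{i+j}=x)\,\mathbb{P}_{p_c(\Zd)}(x\in C_0)$, and then run the surgery and ergodicity parts of the proof of Theorem \ref{Thm-cut} verbatim, which is precisely how the paper concludes.

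However, your evaluation of the double sum contains a genuine error. You identify $\sum_{i,j\ge 0}P^{o}(S_{i+j}=x)$ with the Green function $\sum_{m\ge 0}P^{o}(S_m=x)=\Theta\bigl(|x|^{2-d}\bigr)$. These are not equal: each value $m=i+j$ is counted with multiplicity $m+1$, so
\begin{equation*}
\sum_{i,j\ge 0}P^{o}(S_{i+j}=x)=\sum_{m\ge 0}(m+1)\,P^{o}(S_m=x)\asymp |x|^{4-d},
\end{equation*}
the dominant contribution coming from $m\asymp|x|^{2}$. Consequently the full sum is of order $\sum_{x}|x|^{4-d}\cdot|x|^{2-d}=\sum_{x}|x|^{6-2d}\asymp\sum_{l\ge 1}l^{5-d}$, which converges only for $d\ge 7$ (the paper's cruder bound, $\sum_{k}k^{3-d/2}$, needs $d\ge 9$), not for $d>4$ as you claim. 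For $d\ge 11$ this slip is harmless and your proof is repaired by inserting the correct exponent, but your closing assertion --- that the threshold $d\ge 11$ comes \emph{entirely} from the two-point function asymptotics and ``not from the convergence of the sum'' --- is misleading: unlike the subcritical Theorem \ref{Thm-cut}, where $d\ge 5$ suffices because one can use $\sup_x P^o(S_{i+j}=x)\cdot E^{\Pp}[|C_o|]$, at criticality the time-multiplicity factor genuinely raises the dimension required by the summation argument, so this scheme could not reach $d=5,6$ even if the critical two-point bound were known there.
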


\begin{proof}
We will show that 
\[ \sum_{x \in \Zd} \sum_{i, j \ge 0} P^{0}(S_{i+j} = x) \Pp(x \in C_0) < +\infty. \]
In below $c$, $c^{\prime}$ and $c^{\prime\prime}$ are constants depending only on $d$ and $p$.   
Fitzner and van der Hofstad \cite[Theorem 1.4]{FvdH} 
claims that the decay rate for the two-point function $\mathbb{P}_{p_c(\Zd)}(0 \leftrightarrow x)$ is $|x|^{2-d}$ as $|x| \to +\infty$ for $d \ge 11$,  
by verifying several conditions in Hara's paper \cite{H}, which shows this decay rate for $d \ge 19$.     
Therefore, 
\[ \Pp(x \in C_0) \le c \cdot d_{\Zd}(0, x)^{2-d}, \ \ \text{ for any $x$.} \]  
Since $P^{0}(S_k = x) = 0$ if $k < d_{\Zd}(o,x)$, 
\begin{align*} 
\sum_{x \in \Zd} \sum_{i, j \ge 0} P^{0}(S_{i+j} = x) \Pp(x \in C_0) &\le \sum_{k} c^{\prime} k^{1 - d/2} \left(\sum_{l = 1}^{k} c l^{2-d} \left|\left\{x : d_{\Zd}(0,x) = l\right\}\right| \right)\\ 
&= c^{\prime\prime} \sum_{k \ge 1} k^{3-d/2} < +\infty. 
\end{align*}  
The rest of the proof goes in the same way as in the proof of Theorem \ref{Thm-cut}.   
\end{proof}

The following deals with the supercritical phase. 

\begin{Prop} 
Let $G = \Zd, d \ge 3$.  
Let $H$ be the trace of the two-sided simple random walk on $\Zd$. 
If $p > p_c(G)$, then $\U_p(H)$ has no cut points $P^{0,0} \otimes \Pp$-a.s. 
\end{Prop}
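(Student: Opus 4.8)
The plan is to fix a single edge $e$ of $\Zd$ and show that, $P^{0,0}\otimes\Pp$-a.s., removing $e$ from $\U(H)$ never splits it into two \emph{infinite} components. Since a cut point exists precisely when some edge effects such a splitting, and there are only countably many edges, a union bound over $e$ then yields that $\U(H)$ has no cut points almost surely.

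First I would record two structural facts valid for $p>p_c(\Zd)$. By Lemma \ref{Recur-cluster} there is a unique infinite cluster $\mathcal{C}_\infty$, and applying it to the forward and backward halves separately gives $P^{0}(S_n\in\mathcal{C}_\infty \text{ i.o. } n)=1$, $\Pp$-a.s.\ in both time directions; hence $V(H)\cap\mathcal{C}_\infty$ is infinite and $\mathcal{C}_\infty\subset\U(H)$. Moreover every cluster $C_x$ with $x\notin\mathcal{C}_\infty$ is finite, so each vertex of $V(\U(H))\setminus\mathcal{C}_\infty$ lies in a finite cluster attached to the backbone $H$. The second fact is that $\mathcal{C}_\infty$ has no cut edges: if closing a single open edge with both endpoints in $\mathcal{C}_\infty$ were to split it into two infinite pieces, then closing that edge would create two infinite clusters; but the state of one edge is independent of all others (finite energy), so by the uniqueness in Lemma \ref{Recur-cluster} this event has probability zero, and summing over the countably many edges shows that a.s.\ removing any one edge from $\mathcal{C}_\infty$ leaves at most one infinite component $\mathcal{C}'$, with $\mathcal{C}_\infty\setminus\mathcal{C}'$ finite.

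Now fix $e\in E(\U(H))$, and let $A$ be the component of $\U(H)\setminus e$ containing the bulk $\mathcal{C}'$ of $\mathcal{C}_\infty\setminus e$ (with $\mathcal{C}'=\mathcal{C}_\infty$ if $e\notin E(\mathcal{C}_\infty)$). I claim every other component $B$ is finite. If not, $V(B)$ is infinite while $V(B)\cap\mathcal{C}_\infty\subseteq\mathcal{C}_\infty\setminus\mathcal{C}'$ is finite, so $B$ meets infinitely many distinct finite clusters; as $e$ lies in at most two clusters, all but finitely many of these remain intact in $\U(H)\setminus e$ and hence lie wholly in $B$, producing infinitely many distinct backbone vertices in $B$. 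Since the walk is transient, these are visited at times $n_k$ with $|n_k|\to\infty$, so infinitely many satisfy (say) $n_k\to+\infty$. The crucial observation is that the transient walk traverses the fixed edge $e$ only finitely often, at times bounded by some $m_r$, and it visits $\mathcal{C}'$ infinitely often (it visits $\mathcal{C}_\infty$ i.o.\ but visits the finite remainder $\mathcal{C}_\infty\setminus\mathcal{C}'$ only finitely often). Thus for $n_k>m_r$ the backbone subpath from $S_{n_k}$ forward to the next $\mathcal{C}'$-visit uses no copy of $e$, so it connects $S_{n_k}$ to $\mathcal{C}'\subset A$ inside $\U(H)\setminus e$, contradicting $S_{n_k}\in B$. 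The symmetric argument with $n_k\to-\infty$ uses the backward half of the two-sided walk. Hence $B$ is finite, $e$ is not a splitting edge, and summing these probability-zero events over all $e$ finishes the proof.

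I expect the main obstacle to be exactly this last step: certifying that the pinched-off side $B$ cannot be infinite. The two delicate ingredients are the passage from ``infinitely many vertices of $B$'' to ``infinitely many backbone vertices of $B$'' (handled by noting that $e$ damages at most two clusters), and the reconnection of a backbone tail to the bulk $\mathcal{C}'$ without using $e$, which rests on transience, namely the finiteness of the number of traversals of $e$ and of the visits to the finite set $\mathcal{C}_\infty\setminus\mathcal{C}'$. The no-cut-edge property of $\mathcal{C}_\infty$ is standard but is the other essential input.
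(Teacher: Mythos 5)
Your proof is correct, and it is both more self-contained and more complete than the paper's own argument, which is only two lines long: the paper invokes the Aizenman--Kesten--Newman two-arms estimate \cite{AKN} to get $\Pp\left(0 \in \mathcal{C}_{\infty} \text{ and } 0 \text{ is a cut point of } \mathcal{C}_{\infty}\right) = 0$, concludes by shift invariance that $\mathcal{C}_{\infty}$ has no cut points $\Pp$-a.s., and stops there, leaving the passage from $\mathcal{C}_{\infty}$ to $\U(H)$ entirely implicit. You obtain the same key fact --- that a.s.\ no single edge splits $\mathcal{C}_{\infty}$ into two infinite pieces --- by a different route: deletion tolerance (independence of the state of one edge) combined with the uniqueness of the infinite cluster from Lemma \ref{Recur-cluster}, plus a union bound over the countably many edges. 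This is more elementary, since it reuses only ingredients already present in the paper, whereas the citation of \cite{AKN} imports a quantitative two-arms bound of which only the qualitative consequence is needed. More significantly, your second half is exactly the step the paper omits and that is genuinely required, because ``$\mathcal{C}_{\infty}$ has no cut points'' does not by itself imply ``$\U(H)$ has no cut points'': a splitting edge $e$ of $\U(H)$ need not even belong to $E(\mathcal{C}_{\infty})$. Your argument that any component $B$ of $\U(H)\setminus e$ other than the one containing $\mathcal{C}'$ must be finite --- passing from infinitely many vertices of $B$ to infinitely many trace vertices of $B$ (since $e$ can damage at most two clusters), and then reconnecting a late trace vertex to $\mathcal{C}'$ using that the transient walk traverses $e$ only finitely often, visits the finite set $\mathcal{C}_{\infty}\setminus\mathcal{C}'$ only finitely often, and visits $\mathcal{C}_{\infty}$ infinitely often in each time direction by Lemma \ref{Recur-cluster} --- is sound, and it is the delicate part of the proposition; in that respect your write-up repairs a real gap in the published proof rather than merely reproducing it.
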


\begin{proof}
Using the two-arms estimate by Aizenman, Kesten and Newman \cite{AKN}, 
\[ \Pp \left(\text{$0 \in \mathcal{C}_{\infty}$ and $0$ is a cut point of $\mathcal{C}_{\infty}$}\right) = 0. \]
Using the shift invariance of $\Pp$,   
the unique infinite cluster $\mathcal{C}_{\infty}$ has no cut points $\Pp$-a.s.  
\end{proof}


\section{$\PP$ is being a recurrent subset}

In this section, we assume that $G$ is a transient graph. 
Recall Definition \ref{Def-recur}.    
We regard a recurrent subset as a subgraph and consider the induced subgraph of the recurrent subset.
In other words, if $A$ is a recurrent subset of $V(G)$, then we consider the graph such that the set of vertices is $A$ and the set of edges $\{\{x, y\} \in E(G) : x, y \in A\}$.  

\subsection{The case that $H$ is a fixed subgraph}

We proceed with this subsection as in Subsection 3.1. 
The following correspond to Theorem \ref{extre-graph}.  

\begin{Thm}[Extreme cases]\label{extre-subset}
(i) There is  a graph $G$ such that  for any transient subset $H$ of $G$, \[ 0 < p_{c}(G) < p_{c, 1}(G, H, \PP) = 1.\]  
(ii) There is  a graph $G$ such that for any infinite transient subset $H$ of $G$, \[ p_{c,2}(G, H, \PP) = 0.\]   
\end{Thm}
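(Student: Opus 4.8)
The plan is to prove the two parts with genuinely different mechanisms: part (ii) by a direct second Borel--Cantelli argument exploiting unbounded degrees, and part (i) by taking a regular tree and reducing to the fact that on a tree the enlargement of a transient subset stays transient for every $p<1$.

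For (i) I would take $G=\mathbb{T}_d$ with $d\ge 3$. This is transient and has $p_c(G)=1/(d-1)\in(0,1)$, so the outer inequalities $0<p_c(G)<1$ are automatic, and it remains to show $p_{c,1}(\mathbb{T}_d,H,\PP)=1$ for every transient subset $H$; equivalently, that for every $p<1$ the set $V(\U(H))$ is visited only finitely often by the walk, $P^o\otimes\Pp$-a.s. The approach I would use is the tree gateway structure: since paths in a tree are unique geodesics, a vertex $y$ lying in a subtree that contains no vertex of $H$ belongs to $\U(H)$ if and only if the unique geodesic from $y$ to the nearest $H$-vertex is entirely open, an event of probability a power $p^{\mathrm{dist}(y,H)}$ that decays geometrically. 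I would combine this with the hitting estimate $P^o(\tau_y<\infty)\asymp (d-1)^{-|y|}$ on $\mathbb{T}_d$ and try to show that the expected number of visited vertices in $V(\U(H))$ is finite, so that by Fubini and the first Borel--Cantelli lemma the walk meets $V(\U(H))$ finitely often.

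For (ii) I would take $G$ to be a ``spine with fat rungs'': a ray $v_0,v_1,v_2,\dots$ in which $v_k$ and $v_{k+1}$ are joined by $a_k$ internally disjoint paths of length two through new middle vertices $w_{k,1},\dots,w_{k,a_k}$ (length two keeps the graph simple), with $a_k=k^2$ so that $\sum_k 2/a_k<\infty$ and $G$ is transient. Two observations drive the proof. First, each $v_k$ is a cut vertex through which every infinite path from $v_0$ must pass, so the transient walk, reaching arbitrarily high levels, visits every $v_k$; hence the spine is a recurrent subset, and therefore a transient subset $H$ can contain only finitely many spine vertices. Being infinite, $H$ must then contain middle vertices at infinitely many distinct levels $k$; for each such level fix one $w_k\in V(H)$. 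Second, the edge $\{w_k,v_k\}$ is open with probability $p$, these edges are distinct hence independent across $k$, and whenever such an edge is open we have $v_k\in V(\U(H))$. By the second Borel--Cantelli lemma infinitely many of these edges are open $\Pp$-a.s., so $V(\U(H))$ contains infinitely many spine vertices, all of which the walk visits. Thus $\U(H)$ is a recurrent subset $P^o\otimes\Pp$-a.s.\ for every $p>0$ and every infinite transient $H$, giving $p_{c,2}(G,H,\PP)=0$.

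The hard part will be (i), and specifically making the first-moment bound work uniformly over all transient subsets $H$. The gateway estimate controls $\Pp(y\in\U(H))$ cleanly only for $y$ far from $H$; near $H$ this probability need not decay, and a transient subset of $\mathbb{T}_d$ can be ``fat'' (for example a thinner regular subtree $\mathbb{T}_{d'}\hookrightarrow\mathbb{T}_d$ with $d'<d$ is transient yet has exponentially many vertices per level), so the delicate point is to bound the contribution of the $H$-rich region, i.e.\ to show that $\sum_{x\in H}(d-1)^{-|x|}=E^o[|\mathrm{range}\cap H|]$ and the analogous enlarged sum stay finite for every transient $H$. If a transient subset can fail to have finite expected range-intersection, the first moment must be replaced by an almost-sure argument built on the escape-ray decomposition of the walk (range $=$ ray $\cup$ finite bushes, with the ray leaving $H$ at an a.s.\ finite level); this tree-specific analysis is the technical core, after which part (i) follows immediately for $G=\mathbb{T}_d$. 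Part (ii), by contrast, needs no such estimate: the unbounded degrees force the conclusion for every positive $p$ directly.
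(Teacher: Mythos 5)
Your part (ii) is correct and is essentially the paper's own proof. The paper works on the Benjamini--Gurel-Gurevich line graph (a spine $\mathbb{N}$ with $2k^3$ parallel subdivided edges between $k$ and $k+1$) and uses exactly your mechanism: an infinite transient subset can contain only finitely many spine vertices (each spine vertex is a cut vertex, hence a.s. visited by the transient walk, so the spine is a recurrent subset), hence it contains middle vertices at infinitely many levels, and the second Borel--Cantelli lemma puts infinitely many spine vertices into $V(\U(H))$ a.s. for every $p>0$. Your $k^2$ rungs of length two give the same construction up to constants, and your write-up of this part is complete.

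Part (i) has a genuine gap, precisely at the point you flag. With $G=\mathbb{T}_d$ you must show that for \emph{every} transient subset $H$ and every $p<1$, $\U(H)$ is a.s. a transient subset --- in particular for $p$ in the supercritical range $(1/(d-1),1)$, where infinitely many infinite clusters are glued onto $H$. Your first-moment scheme cannot do this, because a transient subset of $\mathbb{T}_d$ can have infinite expected range-intersection already at $p=0$: fix a ray $x_0,x_1,x_2,\dots$, put $h_n=\lceil\log_{d-1}n\rceil$, let $A_n$ be the set of descendants of $x_{h_n}$ at level $n$, and let $H=\bigcup_n A_n$. Every path from $x_0$ to $A_n$ passes through $x_{h_n}$, the ray is a transient subset, and each $A_n$ is finite, so the walk a.s. visits $H$ only finitely often and $H$ is transient; yet $\sum_{x\in H}(d-1)^{-|x|}\asymp\sum_n(d-1)^{-h_n}\gtrsim\sum_n 1/n=\infty$, so the expected number of visits to $H$, and a fortiori to $\U(H)\supseteq H$, is infinite for every $p$, and the Fubini/Borel--Cantelli step collapses. (One can even make such an $H$ connected by adjoining the ray and the connecting segments; the same argument shows it remains transient with infinite Green sum, so connectivity does not rescue the moment bound.) Thus the ``escape-ray decomposition'' you defer is not a technical remainder: it is the whole content of the claim, and it is exactly the paper's separate theorem that $p_{c,1}(T,H,\PP)=1$ for transient subsets of arbitrary infinite trees, which the paper proves by an almost-sure escape-route argument rather than by moments.

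By contrast, the paper proves (i) with a graph chosen so that the supercritical regime never has to be analyzed: $G$ is $\mathbb{Z}^2$ with a transient tree $T$ satisfying $p_c(T)=1$ attached at the origin (H\"aggstr\"om--Mossel). There $0<p_c(G)=p_c(\mathbb{Z}^2)<1$ is produced by the planar part, which is itself a transient subset of $G$ (the walk a.s. spends finite total time in $\mathbb{Z}^2$), while for every $p<1$ percolation inside $T$ produces only finite clusters, so in the only region the walk can see infinitely often, $\U(H)$ differs from $H$ by attached finite clusters. This decoupling of ``$p_c(G)<1$'' from ``the enlargement stays transient'' is what the choice $G=\mathbb{T}_d$ forfeits; to repair your part (i), either switch to such a hybrid graph or supply the full tree argument.
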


We show this in the same manner as in the proof of Theorem \ref{extre-graph}.  

\begin{proof}  
Even if we add one edge to a transient subset, 
then  the enlarged graph is also a transient subset. 
If not, the random walks hit an added vertex infinitely often, a.s., which contradicts that $G$ is a transient graph. 
Therefore, we can show (i) in the same manner as in the proof of Theorem \ref{extre-graph} (i).

Let $G$ be the graph defined in the proof of Theorem \ref{extre-graph} (ii).  
Let $p > 0$. 
Then, 
\[ \left|\mathbb{N} \cap V(\U_p(H))\right| = +\infty, \ \ \Pp \text{-a.s.}  \] 
Hence $\U_p (H)$ is a recurrent subset, $\Pp$-a.s.   
\end{proof}

Second we consider the case $G$ is $\Zd, d \ge 3$.   
Lemma \ref{Recur-cluster} implies that 
\begin{Prop}
Let $G = \Zd$, $d \ge 3$.  
Then for any transient subset $H$ of $G$ 
\[ p_{c,1}(G, H, \PP) \le p_c(G).\]  
\end{Prop}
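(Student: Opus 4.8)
The plan is to prove the equivalent statement that $\Pp(\U(H)\in\PP)>0$ for every $p>p_c(\Zd)$, and then to let $p\downarrow p_c(\Zd)$. Since $\Zd$ is the Cayley graph of $\mathbb{Z}^d$ with degree of growth $d\ge 3$ and $p_c(\Zd)<1$, Lemma \ref{Recur-cluster} applies for each such $p$: there is $\Pp$-a.s.\ a unique infinite open cluster $\mathcal{C}_\infty$, and by part (ii) this $\mathcal{C}_\infty$ is a recurrent subset of $G$.

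The key observation I would use is that being a recurrent subset is monotone with respect to inclusion of vertex sets: if $A\subset B\subset V(G)$ and $A$ is a recurrent subset, then so is $B$, because $\{S_n\in A \text{ i.o. } n\}\subset\{S_n\in B \text{ i.o. } n\}$ and hence positivity of the probability of the left event forces positivity for the right one. Now, by Definition \ref{enl}, on the event $\{v\in\mathcal{C}_\infty\}$ for a fixed $v\in V(H)$ we have $C_v=\mathcal{C}_\infty$, so $V(\mathcal{C}_\infty)=V(C_v)\subset V(\U(H))$. Combining this inclusion with the monotonicity above and with the recurrence of $\mathcal{C}_\infty$ from Lemma \ref{Recur-cluster}(ii) shows that $\U(H)$ is a recurrent subset on the whole event $\{v\in\mathcal{C}_\infty\}$.

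It therefore suffices to exhibit a single vertex of $H$ that lies in the infinite cluster with positive probability. Fixing any $v\in V(H)$ (which is nonempty, since $H$ is a transient subset in the scope of $\PP$), we have $\Pp(v\in\mathcal{C}_\infty)=\theta(p)>0$ for $p>p_c(\Zd)$, where $\theta(p)$ is the percolation probability. Hence
\[ \Pp(\U(H)\in\PP)\ge\Pp(v\in\mathcal{C}_\infty)=\theta(p)>0, \]
so $p_{c,1}(G,H,\PP)\le p$ for every $p>p_c(\Zd)$, and letting $p\downarrow p_c(\Zd)$ gives the claim.

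There is essentially no hard step here: the content is carried entirely by the two facts supplied by Lemma \ref{Recur-cluster} (existence and recurrence of the unique supercritical infinite cluster) together with the elementary monotonicity of the recurrent-subset property. The only point that deserves care is that one needs merely a positive-probability event, so it is enough to work with a single fixed vertex of $H$ rather than to control all of $V(H)$; trying to argue via the whole of $V(H)$ simultaneously would be unnecessarily delicate, whereas the single-vertex event $\{v\in\mathcal{C}_\infty\}$ at once gives $\mathcal{C}_\infty\subset\U(H)$ and hence recurrence of $\U(H)$.
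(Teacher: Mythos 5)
Your proof is correct and is exactly the argument the paper intends: the paper states this Proposition as an immediate consequence of Lemma \ref{Recur-cluster}, and your write-up simply fills in the details (for $p>p_c(\Zd)$ the unique infinite cluster is a recurrent subset, a fixed $v\in V(H)$ lies in $\mathcal{C}_\infty$ with probability $\theta(p)>0$, and on that event $\mathcal{C}_\infty=C_v\subset\U(H)$, so monotonicity of recurrence gives $\Pp(\U(H)\in\PP)>0$). No gaps; the single-vertex reduction and the monotonicity observation are precisely what the paper leaves implicit.
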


Third we consider the case that $G$ is a tree $T$. 
\begin{Thm}
Let $T$ be an infinite tree and $H$ be a transient subset of $T$.  
Then,
$p_{c, 1}(T, H, \PP) = 1$.  
\end{Thm}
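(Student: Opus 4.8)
The plan is to fix an arbitrary $p<1$ and prove that $\U(H)$ is a transient subset of $T$ for $\Pp$-almost every configuration; since $p_{c,1}(T,H,\PP)$ is the infimum of the $p$ at which $\Pp(\U(H)\in\PP)>0$, this gives $p_{c,1}(T,H,\PP)=1$. By Definition \ref{Def-recur} (and its basepoint-independence) it suffices to show, for a fixed vertex $o$, that $P^{o}\otimes\Pp(S_n\in\U(H)\text{ i.o. }n)=0$. The first step is to reduce the transience of the \emph{random} set $\U(H)$ to a first-moment estimate: by Borel--Cantelli it is enough to bound the expected number of visits, and by Fubini
\[ E^{P^{o}\otimes\Pp}\bigl[\#\{n:S_n\in\U(H)\}\bigr]=\sum_{y\in V(T)} g(o,y)\,\Pp(y\leftrightarrow H),\]
where $g(o,y):=\sum_{m\ge0}P^{o}(S_m=y)$ is the Green function of the walk on $T$ and $\{y\leftrightarrow H\}=\{y\in V(\U(H))\}$ is the event that $y$ is joined to $H$ by an open path. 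If this sum is finite, then $\#\{n:S_n\in\U(H)\}<\infty$ a.s.\ and we are done.

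The essential point is that $T$ is a tree. Because any two vertices are joined by a unique path, the two-point function factorises as $\Pp(x\leftrightarrow y)=p^{d_{T}(x,y)}$, and $g(o,y)$ is a product of edge-escape probabilities along $[o,y]$. I would first isolate the single-cluster estimate: for any fixed $x\in V(T)$,
\[ \sum_{y\in V(T)} g(o,y)\,p^{d_{T}(x,y)}=E^{P^{o}\otimes\Pp}\bigl[\#\{n:S_n\in C_x\}\bigr]<\infty\qquad\text{for every }p<1.\]
This is exactly the statement that each individual open cluster is visited only finitely often by the walk, and it is the place where the geometric factor $p<1$ compensates the volume growth of the tree: on a tree the decay of $g(o,\cdot)$ matches the branching, so the contribution of a cluster hanging off $x$ is a convergent geometric series precisely when $p<1$. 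This single-cluster finiteness is the correct tree mechanism, and it is where the result genuinely differs from the $\Zd$ case, in which the analogous sum diverges above $p_c$.

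The main obstacle, and the step I expect to be delicate, is upgrading the single-cluster bound to the full sum $\sum_{y} g(o,y)\,\Pp(y\leftrightarrow H)$ over the transient set $H$. The crude union bound $\Pp(y\leftrightarrow H)\le\sum_{h\in H}p^{d_{T}(y,h)}$ only reduces matters to $\sum_{h\in H}E[\#\{n:S_n\in C_h\}]$, which can diverge even when $H$ is transient, so one cannot merely add up the single-cluster estimates. To avoid this overcounting I would organise the computation by the unique first vertex of $H$ met along the open path from $y$: on a tree, removing that entry vertex disconnects the bush containing $y$, so at any given time a visit to $\U(H)$ is a visit to a single thin cluster meeting $H$, and the visits can be indexed by their entry points into $H$. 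The hypothesis that $H$ is a transient subset then enters through a last-exit decomposition at $H$ (the walk meets $H$ only finitely often, so $T_H<\infty$ contributes a controllable factor), which together with the single-cluster geometric series should close the estimate. Carrying out this bookkeeping — honestly controlling the correlations among the events $\{y\leftrightarrow h\}$ that share the geodesic toward $H$ — is the crux of the argument; the tree property (unique paths and the disconnection at the entry vertex) is precisely what makes it tractable.
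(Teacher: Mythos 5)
There is a genuine gap, and it is fatal to the whole strategy rather than just to the ``bookkeeping'' step you flag: the quantity you propose to bound is infinite for legitimate instances of the theorem. Since $x \in C_x$, you always have $V(H) \subset V(\U(H))$, so
\[ \sum_{y \in V(T)} g(o,y)\,\Pp(y \leftrightarrow H) \;=\; E^{P^{o}\otimes\Pp}\bigl[\#\{n : S_n \in \U(H)\}\bigr] \;\ge\; \sum_{h \in H} g(o,h), \]
and transience of a subset does \emph{not} imply $\sum_{h\in H} g(o,h) < \infty$: transience of a set is a capacity (Wiener-test) condition, strictly weaker than summability of the Green function over the set. Concretely, let $T$ be the binary tree, fix a ray $\xi = (\xi_0, \xi_1, \dots)$, and let $H$ consist, for each $n$, of all descendants of $\xi_{\lfloor \log_2 n\rfloor}$ at level $n$. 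To visit $H$ at level $n$ the walk must pass through $\xi_{\lfloor \log_2 n\rfloor}$, so visiting $H$ infinitely often forces the walk to visit $\xi_m$ for infinitely many $m$, hence (as the walk a.s.\ converges to an end) to converge to the end $\xi$ --- an event of probability $0$ because $P^{o}(\text{hit } \xi_m) = O(2^{-m})$. Thus $H$ is a transient subset. But $g(o,h) \asymp 2^{-n}$ for $h$ at level $n$, and $H$ has $2^{n - \lfloor\log_2 n\rfloor}$ points at level $n$, so $\sum_{h\in H} g(o,h) \asymp \sum_n 1/n = +\infty$. Hence for this $(T,H)$ your sum diverges for \emph{every} $p>0$, although the theorem correctly asserts that $\U(H)$ is a.s.\ a transient subset. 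No reorganization by entry points and no control of correlations can repair this, because the divergence sits in the exact expectation, not in your union bound; your method could only prove the theorem under the strictly stronger hypothesis $\sum_{h} g(o,h) < \infty$. (A secondary issue: trees here are merely locally finite, and on trees with large finite ``blobs'' attached along a ray even your single-cluster estimate $\sum_y g(o,y)\, p^{d_T(x,y)} < \infty$ can fail for $p$ close to $1$, so it is not a pure consequence of $p<1$.)

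The paper's proof avoids moments altogether and is purely geometric, which is where the tree structure genuinely enters. Since $H$ is a transient subset, there exist an edge $e$ and an endpoint $x \in e$ such that the branch $T_{e,x}$ is a transient subgraph of $T$ with $V(H) \cap V(T_{e,x}) = \{x\}$; inside this branch one chooses a ray $(x_0 = x, x_1, x_2, \dots)$ all of whose sub-branches $T_{\{x_{i-1},x_i\},x_i}$ are transient subgraphs. Any path from $H$ into the $i$-th sub-branch must traverse the edges $\{x_0,x_1\},\dots,\{x_{i-1},x_i\}$, so $\U(H)$ can meet that sub-branch only if all $i$ of these edges are open, an event of probability $p^{i}$; these events decrease in $i$, so a.s.\ some transient sub-branch is disjoint from $\U(H)$, and the walk can escape to infinity inside it without meeting $\U(H)$. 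Transience of $H$ is used only to produce the initial branch, and $p<1$ only through $p^i \to 0$. If you insist on a quantitative route, you would have to replace ``finite expected number of visits'' by a criterion actually equivalent to transience of a set (a Wiener-type test along the ray to the walk's limiting end), which is a substantially different argument.
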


\begin{proof}
For $e \in E(T)$ and $x \in e$, 
we let $T_{e, x}$ be the connected subtree of $T$ such that $x \in V(T_{e,x})$ and $e \notin E(T_{e,x})$.  
Since $H$ is a transient subset, 
there are  an edge $e$ and a vertex $x \in e$ such that 
$T_{e, x}$ is a transient {\it subgraph} of $T$ and $V(H) \cap V(T_{e,x}) = \{x\}$.   
Then we can take an infinite path $(x_0, x_1, x_2, \dots)$ in $T_{e,x}$ 
such that 
$x_0 = x$, and for each $i \ge 0$, 
$\{x_i, x_{i+1}\} \in E(T_{\{x_{i-1}, x_{i}\},  x_i})$, and $T_{\{x_{i-1}, x_{i}\},  x_i}$ is a transient subgraph. 
If $p < 1$, then   
there is  a number $i$ such that $\U_p(H)$ does not intersect with $T_{\{x_{i-1}, x_{i}\},  x_i}$ $\Pp$-a.s.   
Hence $\U_p (H)$ is a transient subset of $T$, $\Pp$-a.s.     
\end{proof}

We do not give an assertion corresponding to Theorem \ref{epsilon-thm}. 
We are not sure that there is a recurrent subset such that the induced subgraph of it satisfies \eqref{near} in Lemma \ref{epsilon}. 

\subsection{The case that $H$ is the trace of the simple random walk}

\begin{proof}[Proof of Theorem \ref{recur-sub-main}]   
Let $o$ be the unit element of the group.        
Let \[ \theta(x) := \sum_{n \ge 0} P^{o}(S_n = x) \text{ and } \theta_p(x) := \sum_{n \ge 0} P^{o} \otimes \Pp \left(S_n \in C_x\right). \]   
We remark that $\theta(x) = \theta_0(x)$.  
First we show $p_c(G) \le p_{c,1}(G, H, \PP)$.  
Let $p < p_c(G)$.  
It follows from the Gaussian upper bound for heat kernel in \cite[Theorem 5.1]{HSC} and the finiteness of mean size of open cluster in the subcritical phase by \cite{M} that 
\[ \theta_p(x) = O\left(d_{G}(o,x)^{2-d}\right).\]      
By following the proof of \cite[Theorem 6.5.10]{LL},   
\[ E^{P^o \otimes \Pp}\left[\sum_{x \in \U_p(H)} \theta(x)\right]  =  \sum_{x \in V(G)} \theta_p(x)\theta(x) = O\left(\sum_{x \in V(G)} d_{G}(o,x)^{4-2d}\right). \]

Using $d \ge 5$ and summation by parts,  
\[ \sum_{x \in V(G)} d_{G}(o,x)^{4-2d} < +\infty. \]

Hence \[ P^o \otimes \Pp\left(S_n \in \U_p(H), \text{ i.o. } n\right) = 0 \] and $p \le p_{c,1}(G, H, \PP)$.  
Thus we have 
\[ p_c(G) \le p_{c,1}(G, H, \PP).\]   
If $p_c(G) < 1$, then by Lemma \ref{Recur-cluster}, \[ p_c(G) \ge p_{c,2}(G, H, \PP).\]  
If $p_c(G) = 1$, this clearly holds. 
Thus we see assertion (i).   

We show assertion (ii) by following the proof of \cite[Theorem 6.5.10]{LL}.   
Let $S^{1}$ and $S^{2}$ be two independent simple random walks on $G$.    
Let \[ Z_{k} := \left| \left(B_G(o, 2^k) \setminus B_G(o, 2^{k-1})\right) \cap S^{1}\left([0, T^{1}_{V(G) \setminus B_G(o, 2^k)})\right) \cap S^{2}\left([0, T^{2}_{V(G) \setminus B_G(o, 2^k)})\right) \right|. \]
Let $E_{k}$ be the event that  $Z_k$ is strictly positive.   

In below, $c_i$, $1 \le i \le 8$, are positive constants depending only on $G$.  
It follows from a generalized Borel-Cantelli lemma that if \\
(1) \[ \sum_{k \ge 1} P^{o,o}(E_{3k}) = +\infty  \ \ \text{ and } \] 
(2) For some constant $c_1$ 
\[ P^{o,o}\left(E_{3k} \cap E_{3m}\right) \le c_1 P^{o,o}(E_{3k}) P^{o,o}(E_{3m}), \ \ k \ne m  \] 
hold, then $E_{3k}$ holds i.o. $k$, $P^{o,o}$-a.s. and assertion (ii) follows.

By \cite[Theorem 5.1]{HSC}, 
\[ c_2 \cdot d_{G}(o, x)^{2-d} \le \theta(x) \le c_3 \cdot d_{G}(o, x)^{2-d}.\]  
By Grigor\'yan and Telcs \cite[Proposition 10.1]{GT} the elliptic Harnack inequality holds.  
Therefore, we have (2).   

Now we show (1). 
\begin{align*} 
E^{P^{o,o}}[Z_k] &= \sum_{x \in B_G(o, 2^k) \setminus B_G(o, 2^{k-1})} P^o\left(T_x < T_{V(G) \setminus B_G(o, 2^k)}\right)^2 \\
&= c_4  \sum_{x \in B_G(o, 2^k) \setminus B_G(o, 2^{k-1})} \theta_{B_G(o, 2^k)}(x)^2.  
\end{align*}

Since $\theta_{B_G(o, 2^k)}(x) \ge c_5 2^{k(2-d)}$ for any $x \in B_G(o, 3 \cdot 2^{k-2}) \setminus B_G(o, 2^{k-1})$,   
\[ E^{P^{o,o}}[Z_k] \ge c_6 2^{k(4-2d)} \left|B_G(o, 3 \cdot 2^{k-2}) \setminus B_G(o, 2^{k-1}) \right|.  \]
Using this and an isoperimetric inequality (Cf. \cite[Theorem 7.4]{HSC}),       
\begin{equation}\label{one}
E^{P^{o,o}}[Z_k] \ge c_7 2^{k(4-d)}.
\end{equation}

We have 
\[ E^{P^{o,o}}[Z_k^2] = \sum_{x, y \in B_G(o, 2^k) \setminus B_G(o, 2^{k-1})} P^o\left(T_x \vee T_y  < T_{V(G) \setminus B_G(o, 2^k)}\right)^2.   \]

Since 
\begin{align*}
P^o\left(T_x \vee T_y  < T_{V(G) \setminus B_G(o, 2^k)}\right) &\le P^o\left(T_x \le T_y  < T_{V(G) \setminus B_G(o, 2^k)}\right) \\ 
&+ P^o\left(T_y \le T_x  < T_{V(G) \setminus B_G(o, 2^k)}\right)\\
&\le \theta(x)\theta(x^{-1} y) + \theta(y)\theta(y^{-1}x) \\
&= O\left(2^{k(2-d)} (1+d_G(x,y))^{2-d}\right), 
\end{align*} 
we have that by using summation by parts 
\[ \sum_{y \in B_G(o, 2^k) \setminus B_G(o, 2^{k-1})} (1+d_G(x,y))^{4-2d} = \begin{cases} O(2^{k}) \ \ \ d = 3, \\   O(k) \ \ \ d = 4.\end{cases} \] 
Therefore,    
\begin{equation}\label{two}
E^{P^{o,o}}[Z_k^2] = \begin{cases} O(4^{k}) \ \ \  d = 3, \\  O(k) \ \ \ d = 4.\end{cases}
\end{equation} 

Using (\ref{one}), (\ref{two}) and the second moment method, 
for $d = 3,4$,  
\[ P^{o,o}(E_k) = P^{o,o}(Z_k > 0) \ge \frac{E^{P^{o,o}}[Z_k]^2}{E^{P^{o,o}}[Z_k^2]} \ge \frac{c_8}{k}. \]
Thus we have (1).  
\end{proof}

\section{$\PP$ is being connected}

\begin{proof}[Proof of Theorem \ref{conn}]    
Let $\Pp^{H}$ be the product measure on $\{0,1\}^{E(G)}$ such that $\Pp^{H}(\omega_e = 1) = 1$ if $e \in E(H)$ 
and $\Pp^{H}(\omega_e = 1) = p$ if $e \notin E(H)$. 
Denote $x \leftrightarrow y$ if $x$ and $y$ are connected by an open path in this percolation model.    

Define $x \sim y$ if and only if  $\Pp^{H}(x \leftrightarrow y) = 1$.  
This is an equivalent definition.  
Let $[x]$ be the equivalent class containing $x$. 
Let $G^{\prime}$ be the quotient graph of $G$ by $\sim$. 
This is a connected graph which may have multi-lines.  
The number of edges between two vertices are finite.   

If $|V(G^{\prime})| = 1$, then \[ \Pp(\text{$\U_p(H)$ is connected}) = 1.\]   
Assume that $|V(G^{\prime})| \ge 2$, $V(G^{\prime}) = A \cup B$ and $A \cap B = \emptyset$.     
Let $A^{\prime}$ (resp. $B^{\prime}$) be a subset of $V(G)$ such that the equivalent class of each element is in $A$ (resp. $B$). 
Then \[ V(G) = A^{\prime} \cup B^{\prime}, \ A^{\prime} \cap B^{\prime} = \emptyset \text{ and } |A^{\prime}| = |B^{\prime}| = +\infty.\]   
By (TI),    
$E(A^{\prime}, B^{\prime}) = +\infty$.  
Since the number of edges between two vertices of $G^{\prime}$ are finite,  $E(A, B) = +\infty$. 

Define $p([x], [y])$ be the probability $[x]$ and $[y]$ are connected by an open edge with respect to the induced measure of $\Pp$ by the quotient map.     
Then 
\[ p\left([x], [y]\right) = 1 - (1-p)^{\left|E([x], [y])\right|}. \]
Hence    
\[ \sum_{[x] \in A, [y] \in B} p([x], [y]) \ge p \sum_{[x] \in A, [y] \in B} 1_{\left\{\text{$[x]$ and $[y]$ are connected by an edge of $G$}\right\}} = +\infty.\]   
By Kalikow and Weiss \cite[Theorem 1]{KW},      
\[ \Pp\left(\text{the random graph on $G^{\prime}$ is connected}\right) \in \{0,1\}.\]

Each connected component of $H$ is contained in an equivalent class, 
and conversely, 
each equivalent class contains each connected component of $H$, due to the percolating everywhere assumption. 
Therefore, $\U(H)$ is connected if and only if  the random graph on $G^{\prime}$ is connected.   
Hence, for any $p > 0$  
\[ \Pp(\text{$\U_p(H)$ is connected}) \in \{0,1\}\] 
and hence \[ p_{c,1}(G, H, \PP) = p_{c,2}(G, H, \PP).\]   
If the number of connected components of $H$ is finite, 
then  $E(A^{\prime}, B^{\prime}) <  +\infty$ for any decomposition $V(G^{\prime}) = A^{\prime} \cup B^{\prime}$. 
Therefore,   
$|V(G^{\prime})| = 1$, and hence,  
\[ p_{c,1}(G, H, \PP) = p_{c,2}(G, H, \PP) = 0.\]     
Thus we have assertion (i).     

Assume that $G$ does not satisfy (TI). 
Then there is  two infinite disjoint sets $A$ and $B$ such that $V(G) = A \cup B$ and $E(A, B) < +\infty$.    
Since $(A, E(A, A))$ and $(B, E(B, B))$ may have finite connected components,    
we modify $A$ and $B$.  
Let $\partial A$ and $\partial B$ be the inner boundaries of $A$ and $B$, respectively.  
For any vertex in $(A, E(A, A))$, there is  a vertex in $\partial A$ such that they are connected in $(A, E(A, A))$.         
Since $A$ and $B$ are infinite, there are  $a_0 \in \partial A$ and $b_0 \in \partial B$       
such that infinitely many vertices of $A$ are connected to $a_0$ in $A$ and infinitely many vertices of $B$ are connected to $b_0$ in $B$. 

Let 
\[  E_{A, B} := \left\{\{a,b\} \in E_{A} : b \leftrightarrow b_0 \text{ in } E(G) \setminus E_{A} \right\},  \]
where
\[ E_{A} := \left\{\{a,b\} \in E(A, B) : a \leftrightarrow a_0 \text{ in } A \right\}. \] 

Let $H$ be a subgraph of $G$ such that $E(H) = E(G) \setminus E_{A, B}$.    
Then $a_0$ and $b_0$ are not connected in $H$.  
Assume that there is  a vertex $x$ such that it is not connected to $a_0$ in $H$.      
Then consider a path $\gamma$ from $x$ to $b_0$ {\it in $G$}.   
Let $\{a, b\} \in E_{A, B}$ be the first edge which $\gamma$ intersects with $E_{A,B}$. 
Since $a \leftrightarrow a_0 \text{ in } A$, $\gamma$ pass $b$ before it pass $a$.   
There is  a path from $b$ to $b_0$ which does not pass any edges of $E_{A, B}$.   
Hence, there is  a path from $x$ to $b_0$ in $H$.  
Therefore, there are  just two connected components of $H$, and  
due to the choices of $a_0$ and $b_0$, 
they are both infinite. 
Thus $H$ is percolating everywhere.     

Since $E_{A, B}$ is finite, \[ p_{c,1}(G,H,\PP) = 0.\]   
Since $E_{A, B}$ is non-empty,  
\[ p_{c,2}(G,H,\PP) = 1.\]   
Thus we have assertion (ii).  
\end{proof}


\section*{Acknowledgements} 
The author wishes to express his gratitude to N. Kubota for stimulating discussions, to I. Benjamini for pointing out Theorem \ref{Itai} and to H. Duminil-Copin for notifying me of the reference \cite{BeTa}.
The author also wishes to express his gratitude to a referee for comments.   
The author was supported by Grant-in-Aid for JSPS Research Fellow (24.8491) and Grant-in-Aid for Research Activity Start-up (15H06311) and for JSPS Research Fellows (16J04213).


\begin{thebibliography}{99}
\bibitem{AKN} M. Aizenman, H. Kesten, and C. M. Newman, 
Uniqueness of the infinite cluster and continuity of connectivity functions for short and long range percolation, 
{\it Comm. Math. Phys.} 111 (1987) 505-531.
\bibitem{B}  I. Benjamini, 
Coarse geometry and randomness, \'Ecole d'\'Et\'e de Probabilit\'es de Saint-Flour 2011, Springer, Cham, 2013. 
\bibitem{BGG} I. Benjamini and O. Gurel-Gurevich, 
Almost sure recurrence of the simple random walk path, 
preprint, available at arXiv math 0508270.
\bibitem{BGGL}  I. Benjamini, O. Gurel-Gurevich, and R. Lyons,
Recurrence of random walk traces,
{\it Ann. Probab.} 35 (2007) 732-738. 
\bibitem{BHS} I. Benjamini, O. H\"aggstr\"om and O. Schramm, 
On the effect of adding $\epsilon$-Bernoulli percolation to everywhere percolating subgraphs of $\mathbb{Z}^d$, 
{\it J. Math. Phys.} 41 (2000) 1294-1297.
\bibitem{BS96} I. Benjamini and O. Schramm. 
Percolation beyond $\Zd$, many questions and a few answers. 
{\it Electron. Comm. Probab.}, 1 (1996) no. 8, 71-82. 
\bibitem{BS01} I. Benjamini and O. Schramm. 
Percolation in the hyperbolic plane. 
{\it J. Amer. Math. Soc.}, 14(2) (2001) 487-507.
\bibitem{BeTa} I. Benjamini and V. Tassion, 
Homogenization via sprinkling, 
arXiv 1505.06069.  
\bibitem{BR} B. Bollob\'as and O. Riordan, 
{\it Percolation},   
Cambridge University Press, New York, 2006.   
\bibitem{BK} 
R. M. Burton and M. Keane, 
Density and uniqueness in percolation. 
{\it Comm. Math. Phys.} 121 (1989), no. 3, 501-505. 
\bibitem{D} 
R. Diestel, 
{\it Graph theory}, 
Fourth edition, Springer, 2010.  
\bibitem{FvdH} 
R. Fitzner and R. van der Hofstad, 
Mean-field behavior for nearest-neighbor percolation in $d > 10$: Extended version, 
arXiv 1506.07977.  
\bibitem{GT} A. Grigor\'yan, and A. Telcs,  
Sub-Gaussian estimates of heat kernels on infinite graphs
{\it Duke Math. J.} 109 (2001), no. 3, 451-510. 
\bibitem{GKZ} G. R. Grimmett, H. Kesten and Y. Zhang, 
Random walk on the infinite cluster of the percolation model, 
{\it Probab. Theory Related Fields} 96 (1993) 33-44. 
\bibitem{HM} O. H\"aggstr\"om and E. Mossel. 
Nearest-neighbor walks with low predictability profile and percolation in $2+\epsilon$ dimensions,  
{\it Ann. Probab.} 26 (1998) 1212-1231. 
\bibitem{H} T. Hara. 
Decay of correlations in nearest-neighbor self-avoiding walk, percolation, lattice trees and animals. 
{\it Ann. Probab.}, 36(2) (2008) 530-593.
\bibitem{HSC} W. Hebisch, and L. Saloff-Coste,  
Gaussian estimates for Markov chains and random walks on groups,  
{\it Ann. Probab.} 21 (1993), no. 2, 673-709. 
\bibitem{Ka} S. Kakutani, 
Random ergodic theorems and Markoff processes with a stable distribution, 
{\it Proc. Second Berkeley Symp. on Math. Stat. and Prob.}, 247-261, Univ. of California Press, 1951.
\bibitem{KW} S. Kalikow, and B. Weiss, 
When are random graphs connected. 
{\it Israel J. Math.} 62 (1988), 257-268.
\bibitem{La} G. F. Lawler
{\it Intersections of Random Walks}, 
Birkhauser, 1996.   
\bibitem{LL} G. F. Lawler and V. Limic,  
{\it Random Walk: A Modern Introduction},  
Cambridge University Press, Cambridge, 2010. 
\bibitem{LP} R. Lyons and Y. Peres. 
{\it Probability on trees and networks}, 
available at {\tt http://mypage.iu.edu/$\sim$rdlyons/prbtree/prbtree.html}.
\bibitem{M} M. V. Mensikov, 
Coincidence of critical points in percolation problems. 
Dokl. Akad. Nauk SSSR 288 (1986), no. 6, 1308-1311. 
\bibitem{O16} 
K. Okamura, 
Enlargement of subgraphs of infinite graphs by Bernoulli percolation : A summary, 
RIMS K\^oky\^uroku Bessatsu B59: {\it Stochastic Analysis on Large Scale Interacting Systems}, July, 2016, 227-236. 
\bibitem{P} Y. Peres, 
Probability on trees :  an introductory climb, 
{\it Lectures on probability theory and statistics (Saint-Flour 1997)} 193-280. Lecture Notes in Math. 1717, Springer, Berlin, 1999.  
\bibitem{PY} M. Pollicott and M. Yuri, 
{\it Dynamical systems and ergodic theory},   
Cambridge university press, 1998.  
\bibitem{Wo} W. Woess, 
{\it Random walks on infinite graphs and groups}, 
Cambridge University Press, 2000.  
\end{thebibliography}
\end{document}